\definecolor{aleacolor}{rgb}{0.16,0.59,0.78}
\newcommand*\colvec[1]{
	\global\colveccount#1
	\begin{pmatrix}
		\colvecnext
	}
	\def\colvecnext#1{
		#1
		\global\advance\colveccount-1
		\ifnum\colveccount>0
		\\
		\expandafter\colvecnext
		\else
	\end{pmatrix}
	\fi
}
\newcommand{\ndR}{\mathbb{R}}
\renewcommand{\Pr}[1]{\mathbb{P}(#1)}
\newcommand{\Prb}[1]{\mathbb{P}\left(#1\right)}
\newcommand{\Ex}[1]{\mathbb{E}[#1]}
\newcommand{\Exb}[1]{\mathbb{E}\left[#1\right]}
\newcommand{\Va}[1]{\mathbb{V}[#1]}
\newcommand{\convdis}{\,{\buildrel d \over \longrightarrow}\,}
\newcommand{\convp}{\,{\buildrel p \over \longrightarrow}\,}
\newcommand{\convas}{\,{\buildrel a.s. \over \longrightarrow}\,}
\newcommand{\eqdist}{\,{\buildrel d \over =}\,}
\newcommand{\He}{\mathrm{H}}
\newcommand{\he}{\mathrm{h}}
\newcommand{\cB}{\mathcal{B}}
\newcommand{\cC}{\mathcal{C}}
\newcommand{\cH}{\mathcal{H}}
\newcommand{\cN}{\mathcal{N}}
\newcommand{\cP}{\mathcal{P}}
\newcommand{\cT}{\mathcal{T}}
\newcommand{\cZ}{\mathcal{Z}}
\newcommand{\mG}{\mathsf{G}}
\newcommand{\mN}{\mathsf{N}}
\newcommand{\mP}{\mathsf{P}}
\newtheorem{theorem}{Theorem}[section]
\newtheorem{proposition}[theorem]{Proposition}
\newtheorem{lemma}[theorem]{Lemma}
\numberwithin{equation}{section}
\keywords{Phylogenetic networks, random graphs, branching processes}
\title{\textbf{A branching process approach to level-$k$ phylogenetic networks}}
\date{}
\author{Benedikt Stufler}
\address[Benedikt Stufler]{Vienna University of Technology}
\email{benedikt.stufler at tuwien.ac.at}
\begin{document}

\vspace {-0.5cm}

\begin{abstract}
	The mathematical analysis of random phylogenetic networks via analytic and algorithmic methods has received increasing attention in the past years. In the present work we introduce branching process methods to their study. This approach appears to be new in this context. Our main results focus on random level-$k$ networks with $n$ labelled leaves. Although the number of reticulation vertices in such networks is typically linear in $n$, we prove that their asymptotic global and local shape is tree-like in a well-defined sense. We show that the depth process of vertices in a large network converges towards a Brownian excursion after rescaling by $n^{-1/2}$. We also establish Benjamini--Schramm convergence of large random level-$k$ networks towards a novel random infinite network.
\end{abstract}


\maketitle

\section{Introduction}

Phylogenetic networks may be used to model the evolutionary history of species that have undergone  reticulate events~\cite{bio1}, such as horizontal gene transfer (by which genes are transferred across species) or hybrid speciation (by which lineages recombine to create a new one)~\cite{bio2}.

The application of phylogenetic networks in  evolutionary biology motivates the mathematical study of their number and shape, which has received increasing attention in recent literature. See for example~\cite{fuchs2020asymptotic, bienvenu2020combinatorial, BGM, MR3900551, MR3319869, MR4109661, fuchs2020counting} and references given therein. In the present work, we introduce branching process methods to their study. This approach appears to be new in this context and makes a fine addition to the current toolbox of analytic and algorithmic methods. 


\subsection{Asymptotic enumeration of level $k$-networks}

A binary rooted phylogenetic network $N$ on a finite non-empty set $X$ of leaves  may be defined as a simple rooted directed graph with no directed cycles that satisfies the following constraints:
\begin{enumerate}
	\item It's unique \emph{root} has indegree $0$ and outdegree $2$.
	\item All non-root vertices are either \emph{tree nodes} (indegree $1$, outdegree $2$), \emph{reticulation nodes} (indegree $2$, outdegree $1$), or \emph{leaves} (indegree $1$, outdegree $0$).
\end{enumerate}
The leaves are bijectively labelled with elements of $X$. It will be notationally convenient to additionally admit the network consisting of a single labelled root vertex with no edges.

 There is an infinite number of such  networks on a given set $X$. For this reason, one restricts to subclasses for which this number is finite. We are going to focus on level-$k$ networks, with $k$ denoting a fixed  positive integer. Recall that a \emph{cutvertex} in a connected graph is a vertex whose removal disconnects the graph. Similarly, a \emph{bridge} is an edge whose removal disconnects the graph. A \emph{block} (or \emph{$2$-connected component}) is a connected induced subgraph that is maximal with the property of having no cutvertices of its own. The reader may consult books on the foundation of graph theory for further details~\cite{MR1633290}. We say the binary rooted phylogenetic network $N$  is a \emph{level-$k$} network, if the following conditions are met:
\begin{enumerate}
	\item Any block of $N$ contains at most $k$ reticulation vertices of $N$.
	\item Any block of $N$ with at least $3$ vertices contains at least $2$ vertices that are the source of bridges of $N$.
\end{enumerate}
Here we view directed edges as joining a \emph{source} vertex to a \emph{destination} vertex according to their orientation.  The second condition ensures that there are only finitely many such networks on a given set $X$. In fact, their number satisfies the following asymptotic expression:

\begin{lemma}
	\label{le:asymptotics}
	The number $N(k,n)$ of level-$k$ networks on an $n$-element set satisfies
	\begin{align}
		N(k,n) \sim a_k n^{-3/2} \rho_k^{-n} n! \qquad \text{as} \qquad  n \to \infty
	\end{align}
	for constants $a_k, \rho_k >0$ that only depend on $k$.
\end{lemma}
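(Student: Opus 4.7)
The plan is to prove this by analytic combinatorics applied to the exponential generating function
\[
F_k(z) := \sum_{n \geq 1} N(k,n)\,\frac{z^n}{n!}.
\]
The desired asymptotic is the canonical one associated with an algebraic square-root singularity: once $F_k$ has been shown to possess a unique dominant singularity of square-root type at some $\rho_k > 0$, the transfer theorem of Flajolet--Odlyzko yields $[z^n]F_k(z) \sim a_k n^{-3/2} \rho_k^{-n}$, and the claimed asymptotic for $N(k,n) = n!\,[z^n]F_k(z)$ follows immediately.

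The first step is to set up a functional equation for $F_k$ via the block decomposition of a rooted network at its root. A level-$k$ network is either a single labelled leaf (contributing $z$) or has a non-trivial root block, from which outgoing bridges lead to sub-networks that are themselves level-$k$ and on disjoint subsets of leaves. Summing over isomorphism classes of root blocks, applying the exponential formula for leaf labels, and accounting for symmetry factors and bridge-attachment positions, one obtains an equation of the form
\[
F_k(z) = z + P_k(F_k(z)),
\]
with $P_k$ a polynomial of degree at least $2$, with non-negative rational coefficients, and $P_k(0) = 0$. The key input making $P_k$ a \emph{polynomial} rather than a power series is that there are only finitely many level-$k$ block-generators up to isomorphism: the two defining conditions (at most $k$ reticulations per block, at least two bridge-source vertices in every non-trivial block) together with the binary-degree constraints bound the total vertex count of any block by a function of $k$ alone.

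Singularity analysis then proceeds via the smooth implicit-function schema of Flajolet--Sedgewick (Theorem VI.6). Writing $\phi(u) := u - P_k(u)$ so that $\phi(F_k(z)) = z$, the equation $\phi'(\tau_k) = 0$, equivalently $P_k'(\tau_k) = 1$, has a unique positive solution $\tau_k$, since $P_k'$ is non-decreasing on $[0,\infty)$ and tends to $+\infty$ (recall $\deg P_k \geq 2$). One then verifies that $\rho_k := \phi(\tau_k) > 0$ and that $F_k$ is aperiodic, so that $\rho_k$ is the unique singularity of $F_k$ on its circle of convergence. The schema then delivers the square-root singular expansion of $F_k$ at $\rho_k$ with an explicit constant $a_k$ expressed in terms of $\tau_k$, $\rho_k$ and $P_k''(\tau_k)$.

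The main obstacle is the rigorous set-up of the functional equation: proving the finiteness of level-$k$ block-generators with uniform quantitative bounds, and carrying out the symbolic-method calculation of $P_k$ with accurate bookkeeping of symmetries and of the multiple ways to attach labelled sub-networks along bridges. Once this is in place, the analytic half is entirely standard, and aperiodicity is immediate since arbitrarily large caterpillar trees are level-$k$ networks, so $F_k$ has positive coefficients for every $n \geq 1$.
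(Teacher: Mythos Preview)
Your overall strategy---set up a functional equation $F_k(z) = z + P_k(F_k(z))$ and apply the smooth implicit-function schema---is sound, and is in fact a legitimate alternative to the paper's route, which recasts the same equation as a fixed-point for the leaf generating function of a critical Galton--Watson tree and then reads off the asymptotic from a known $n^{-3/2}$ law for the leaf count. Both approaches hinge on the same analytic fact: the existence of $\tau_k>0$ strictly inside the disk of convergence of $P_k$ with $P_k'(\tau_k)=1$.

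However, there is a genuine error in your argument for that fact. You assert that $P_k$ is a \emph{polynomial} because the defining conditions ``bound the total vertex count of any block by a function of $k$ alone.'' This is false. A block of a level-$k$ network can have arbitrarily many vertices: take any generator and subdivide its edges into paths of arbitrary length. Each new subdivision vertex has indegree~$1$ and outdegree~$1$ inside the block, hence becomes a tree node of the network and the source of a bridge to a fresh leaf. Thus for every $k\ge 1$ there are simple level-$k$ networks with $d$ leaves for every sufficiently large $d$, and $P_k$ is a genuine power series with radius of convergence equal to~$1$, not a polynomial. Your justification that $P_k'(\tau_k)=1$ has a positive solution---namely that $P_k'$ tends to $+\infty$ because $\deg P_k \ge 2$---therefore collapses.

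The repair is exactly the generator decomposition the paper uses: since there are only finitely many \emph{generators} (not blocks) for each $k$, one obtains
\[
P_k(u) \;=\; \frac{u^2}{2} \;+\; f\!\left(u,\,\frac{1}{1-u}\right)
\]
for a bivariate polynomial $f$ with $f(0,w)=f(u,0)=0$, coming from the geometric series that enumerate path-subdivisions of generator edges. This exhibits $P_k$ as having radius of convergence~$1$ with $P_k'(t)\to\infty$ as $t\nearrow 1$, which is precisely what you need to guarantee the solution $\tau_k\in(0,1)$ and to verify the analyticity hypothesis of the schema at $(\rho_k,\tau_k)$. Once this is in place, your transfer-theorem conclusion goes through; but as written, the ``polynomial'' claim is the load-bearing step and it is wrong.
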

For $k=1$ and $k=2$ this was already shown  by~\cite{BGM} via analytic methods, who additionally calculated the involved constants,  gave exact enumerating formulas, studied unrooted networks, and proved limit theorems for the numbers of undirected cycles and inner edges in random networks.

\subsection{Global shape and limits of extremal parameters}

Note that each vertex $v$ in $N$ may be reached from the root by following a directed path that only traverses edges according to their orientation. There may be multiple such paths, and we denote the length of a shortest path by $\he_N(v)$. Often, $\he_N(v)$ is called the depth or height of $v$. The maximal height of vertices in $N$ is denoted by $\He(N)$. We let $|N|$ denote the number of vertices of~$N$.

Throughout this paper, we fix a positive integer $k$. For each integer $n \ge 2$ we let $\mN_n$ be drawn uniformly at random among all level-$k$ networks on the set $[n] := \{1, \ldots, n\}$. If we order its vertices $v_1, \ldots, v_{|\mN_n|}$ we may form the corresponding continuous height process $(\he_{\mN_n}({v_t}) : 0 \le t \le |\mN_n|)$ that starts at $\he_{\mN_n}(v_0) := 0$ and linearly interpolates the values $\he_N(v_i)$ for integers $i \in \{0, \ldots, |\mN_n|\}$. Of course, the height process depends on the order of vertices we choose.

\begin{theorem}
	\label{te:main1}
	We may couple $\mN_n$ with an ordering of its vertices such that the associated height process $(\he_{\mN_n}(v_t) : 0 \le t \le |\mN_n|)$ satisfies
	\begin{align}
		\label{eq:functional}
		(b_k n^{-1/2} \he_{\mN_n}(v_{s |\mN_n|}) : 0 \le s \le 1) \convdis (\mathsf{e}(s) : 0 \le s \le 1) \qquad \text{as} \qquad  n \to \infty
	\end{align}
	for a constant $b_k >0$ that only depends on $k$. Here $(\mathsf{e}(s) : 0 \le s \le 1)$ denotes Brownian excursion normalized to have duration $1$. The same limit as in~\eqref{eq:functional} holds if we form the height process using leaves only.
\end{theorem}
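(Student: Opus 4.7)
The plan is to decompose a level-$k$ network along its bridges, interpret the resulting block tree as a critical Galton--Watson tree with finite-variance offspring conditioned on its number of leaves, and then invoke Aldous's invariance principle.

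First, note that every block of a level-$k$ network contains at most $k$ reticulation vertices and is binary, so a short degree count bounds the block size by a constant $c_k$ depending only on $k$. In particular, only finitely many block \emph{shapes} arise, where a shape records a block together with a distinguished entry vertex and an ordered list of its bridge-attachment sites. Contracting each block of $\mN_n$ to a single point yields a rooted tree $\mT_n$ whose nodes are decorated by these shapes and whose edges correspond to the bridges of $\mN_n$. The map $N \mapsto (T(N), \text{decorations})$ is a bijection, so $\mN_n$ corresponds to a random decorated tree drawn with weight proportional to $\prod_v \omega(d_v)$, where $d_v$ is the decoration at $v$ and $\omega$ accounts for the internal structure (including leaf labellings) inside the corresponding block.

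This places the problem inside the framework of Boltzmann-weighted simply generated trees. The asymptotic $N(k,n) \sim a_k n^{-3/2} \rho_k^{-n} n!$ provided by Lemma \ref{le:asymptotics} is the textbook signature of a critical offspring law with finite second moment: the polynomial correction $n^{-3/2}$ reflects a square-root singularity of the associated generating function, which under Boltzmann normalization at $\rho_k$ produces a critical (multi-type) Galton--Watson offspring distribution with finite variance. Aldous's invariance principle then gives that the height process of $\mT_n$, rescaled by $|\mT_n|$ in time and by a constant multiple of $n^{-1/2}$ in space, converges in distribution to Brownian excursion. The transfer back to $\mN_n$ is then routine: for any vertex $v$ of $\mN_n$ lying in the block encoded by a node $u$ of $\mT_n$, the height $\he_{\mN_n}(v)$ differs from the height in $\mT_n$ of $u$ by at most $c_k$, which is $o(n^{1/2})$, so rescaling preserves the limit. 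The required ordering of vertices of $\mN_n$ is obtained by performing a depth-first traversal of $\mT_n$ and, within each visited block, listing vertices according to a fixed rule depending only on the decoration. The leaves-only statement follows because the network-leaves correspond to nodes of $\mT_n$ with one specific decoration, and in a critical finite-variance Galton--Watson tree such nodes form an asymptotically positive fraction of all vertices and are uniformly spread along the contour, so the corresponding subsequence of the height process converges to the same Brownian excursion up to a deterministic time-change that can be absorbed into the constant $b_k$.

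The principal technical hurdle lies in the first two steps: one must set up the block decomposition and the accompanying EGF identities carefully enough that the induced law on decorated trees is genuinely that of a conditioned critical Galton--Watson tree, and then derive criticality and finiteness of the offspring variance directly from the singularity analysis underlying Lemma \ref{le:asymptotics}. Once this is in place, Aldous's theorem combined with the $c_k$-bound on block diameters delivers the conclusion without further difficulty.
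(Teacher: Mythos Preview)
Your overall architecture---decompose the network as a decorated tree, identify the tree as a conditioned critical Galton--Watson tree, invoke an invariance principle, then transfer back---matches the paper's. But two concrete steps in your transfer are wrong, and they are precisely where the paper does its real work.

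First, your claim that ``a short degree count bounds the block size by a constant $c_k$ depending only on~$k$'' is false. A block of a level-$k$ network has at most $k$ reticulation vertices, but it can contain arbitrarily many tree nodes (indegree~$1$, outdegree~$1$ \emph{inside the block}, with the other outgoing edge being a bridge). This is exactly why the paper introduces generators and observes that simple networks arise by subdividing generator edges into paths of arbitrary length; consequently $\cH(z)$ has radius of convergence~$1$, not~$\infty$, and the offspring law~$\xi$ is unbounded (with finite exponential moments). The correct bound (Proposition~\ref{pro:bound}) is $|H|\le 2(d+k)$ for $H\in\cH[d]$, which depends on the outdegree~$d$, and $d$ ranges over all integers $\ge 2$.

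Second, even if blocks were bounded, the assertion ``$\he_{\mN_n}(v)$ differs from the height in $\mT_n$ of~$u$ by at most~$c_k$'' is still wrong: a root-to-$v$ path in the network traverses one block per tree-level, and each block contributes an internal path segment, so the network height is a \emph{sum} of $\he_{\mT_n}(u)$ random increments, not the tree height plus a bounded error. The limit therefore involves a nontrivial multiplicative stretch factor. The paper makes this precise via Lemma~\ref{le:stretch}: letting $\eta$ be the root-to-random-leaf distance in a size-biased head structure, one has $\bigl|\he_{\mN_n}(u)-\Ex{\eta}\,\he_{\tau_n}(v)\bigr|\le n^{3/8}$ with high probability, established through a large-deviation bound for sums of i.i.d.\ copies of~$\eta$ combined with the size-biased tree device of Section~\ref{sec:sizebias}. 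A companion concentration result (Lemma~\ref{le:spread}) controls the surplus vertices contributed by the decorations, which is needed to align the time parametrizations of the two height processes. The final constant is $b_k=b/\Ex{\eta}$, not simply the tree constant~$b$. These two lemmas are the heart of the proof and cannot be replaced by a bounded-block shortcut.
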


Questions concerning heights in various classes of random phylogenetic networks were raised in~\cite[Sec. 7]{MR3319869} and~\cite[Sec. 8]{fuchs2020asymptotic}.

We prove Theorem~\ref{te:main1} by establishing a new relation between distances in the random phylogenetic network $\mN_n$ and distances in a Galton--Watson tree conditioned on having $n$ leaves. This allows us to apply invariance principles for the latter given in~\cite{MR2946438, MR3335013}.


An immediate consequence of~Theorem~\ref{te:main1} is that 
\begin{align}
	\label{eq:height}
	b_k n^{-1/2} \He(\mN_n) \convdis \sup_{0 \le s \le 1} \mathsf{e}(s).
\end{align}
Likewise, a uniformly selected vertex (or a uniformly selected leaf) $u_n$ of $\mN_n$ satisfies
\begin{align}
	\label{eq:point}
	b_k n^{-1/2} \he_{\mN_n}(u_n) \convdis  \mathsf{e}(r)
\end{align}
with $0 \le r \le 1$ denoting a random point of the unit interval $[0,1]$, selected independently according to the uniform distribution. We may also obtain tail-bounds and convergence of moments:

\begin{theorem}
	\label{te:main2}
	There are constants $C,c>0$ such that for all $x>0$ and $n \ge 2$
	\begin{align}
		\label{eq:heightbound}
		\Pr{\He(\mN_n) > x} \le C \exp(-cx^2/n).
	\end{align}
	Moreover, all higher moments in~\eqref{eq:height} and~\eqref{eq:point} converge.
\end{theorem}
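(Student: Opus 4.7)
The plan is to derive both statements from the coupling with a conditioned Galton--Watson tree $T_n$ developed in the proof of Theorem~\ref{te:main1}, combined with the classical sub-Gaussian height bound for conditioned Galton--Watson trees due to Addario-Berry, Devroye and Janson.

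For~\eqref{eq:heightbound}, the first observation is a deterministic structural fact: in a binary level-$k$ network every block has at most $c_k$ vertices, where $c_k$ depends only on $k$. This follows because each non-root, non-leaf vertex has total undirected degree $3$, so handshaking together with cyclomatic counting $|E(B)| = |V(B)| - 1 + r(B)$ and $r(B) \le k$ forces $|V(B)| = O(k)$. Grouping consecutive edges of any root-to-vertex path in $\mN_n$ by block then yields $\He(\mN_n) \le c_k \He(\mT_n)$, where $\mT_n$ is the plane tree obtained by contracting every block of $\mN_n$ to a single point. Under the coupling underlying Theorem~\ref{te:main1}, $\mT_n$ is identified with (or closely compared to) a Galton--Watson tree $T_n$ conditioned on having $n$ leaves, whose offspring distribution is critical with a finite exponential moment---the exponential decay being a direct consequence of the positive radius of convergence $\rho_k$ appearing in Lemma~\ref{le:asymptotics}. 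The Addario-Berry--Devroye--Janson bound applied to $T_n$ then gives $\Pr{\He(T_n) > x} \le C_1 \exp(-c_1 x^2 / n)$, which transfers to~\eqref{eq:heightbound} after absorbing the factor $c_k$ into the constants.

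With~\eqref{eq:heightbound} in hand, the moment convergence follows from a standard uniform integrability argument. For any fixed $p \ge 1$,
\begin{align*}
\sup_{n \ge 2}\Exb{\bigl(b_k n^{-1/2}\He(\mN_n)\bigr)^{p+1}} \le \int_0^\infty (p+1)\, x^p\, C \exp(-c' x^2)\, dx < \infty,
\end{align*}
so the family $\bigl\{(b_k n^{-1/2}\He(\mN_n))^p : n \ge 2\bigr\}$ is bounded in $L^{1+1/p}$, hence uniformly integrable. Combined with the weak convergence in~\eqref{eq:height}, this upgrades to convergence of $p$-th moments for every $p$. For~\eqref{eq:point}, the trivial pointwise inequality $\he_{\mN_n}(u_n) \le \He(\mN_n)$ yields the same uniform integrability and the analogous moment convergence.

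The principal technical obstacle is the precise form of the sub-Gaussian height bound for Galton--Watson trees conditioned on the number of \emph{leaves} rather than on the total number of vertices. The original Addario-Berry--Devroye--Janson result is formulated under the vertex-count conditioning, so one must adapt it to the leaf-count conditioning that is natural for $\mN_n$. This can be handled either by an equivalence-of-conditionings argument---exploiting Gaussian concentration of the leaf count of an unconditioned critical Galton--Watson tree around its mean---or by a direct adaptation of the spine decomposition argument in their proof. Once this technical point is settled, the remainder of the proof is routine.
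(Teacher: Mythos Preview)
Your moment-convergence argument from the tail bound is correct and matches the paper's intent. The gap is in your derivation of~\eqref{eq:heightbound} itself: the claimed deterministic bound $|V(B)|\le c_k$ for every block $B$ of a level-$k$ network is false.

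Your handshaking computation implicitly assumes that every non-root vertex of a block $B$ has degree $3$ \emph{within} $B$. That is not the case. A tree node of $\mN_n$ whose second outgoing edge is a bridge has degree $2$ inside its block; likewise a reticulation vertex whose outgoing edge is a bridge. If you redo the count with $d_2$ degree-$2$ vertices and $d_3$ degree-$3$ vertices in $B$, the identity $|E(B)|=|V(B)|-1+r$ (with $r\le k$ reticulation vertices) combined with handshaking yields only $d_3=2r-2\le 2k-2$, while $d_2$ is unconstrained. Concretely, blowing up a single edge of a generator into a path of length $m$ produces a block with $m-1$ degree-$2$ vertices. This is exactly what Proposition~\ref{pro:bound} says: the head structure on $d$ leaves has up to $2(d+k)$ vertices, so the block containing a vertex of outdegree $d$ in $\tau_n$ may have order $d$, and by Proposition~\ref{pro:max} the maximal outdegree of $\tau_n$ is of order $\log n$, not $O(1)$.

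Consequently there is no deterministic inequality $\He(\mN_n)\le c_k\He(\tau_n)$, and the paper's proof cannot be shortcut this way. The actual argument splits $\Pr{\He(\mN_n)>x}$ into $\Pr{\He(\tau_n)>\epsilon x}$ (handled by the Addario-Berry--Devroye--Janson bound, transferred to leaf-conditioning via the blow-up representation of~\cite{MR3335013,MR4132643}) and $\Pr{\He(\tau_n)\le\epsilon x,\ \He(\mN_n)>x}$. The second event forces a sum $\eta_1+\cdots+\eta_\ell$ of at most $\epsilon x$ independent ``stretch'' variables (each the root-to-random-leaf distance in a random head structure) to exceed $x/2$; a standard exponential-moment large deviation bound, together with a crude bound on the maximal head-structure size, then gives $\exp(-\Theta(x))\le\exp(-\Theta(x^2/n))$ in the relevant range $\sqrt{n}\le x\le O(n)$. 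This probabilistic control of the stretch is the missing ingredient in your proposal.
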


Our main tool for proving Theorem~\ref{te:main2} is a similar tail-bound by~\cite{MR3077536} for the height of  Galton--Watson trees conditioned to be large.  Theorem~\ref{te:main2} entails that the moments converge in~\eqref{eq:height}, yielding
\begin{align}
	\label{eq:moments1}
	b_k n^{-1/2} \Ex{\He(\mN_n)} \to \Exb{\sup_{0 \le s \le 1} \mathsf{e}(s)} =  \sqrt{\pi / 2}
\end{align}
and
\begin{align}
	\label{eq:moments2}
b_k^p n^{-p/2} \Ex{\He(\mN_n)^p}	\to 2^{-p/2} p(p-1) \Gamma(p/2) \zeta(p) 
\end{align}
for each integer $p \ge 2$. Here $\Gamma$ refers to Euler's gamma-function, and $\zeta$ to Riemann's zeta-function. Compare with~\cite[Eq. (1.6)]{MR3077536}.

Instead of using lengths of shortest directed paths, we may also study  the structure of the undirected graph $\mG_n$ underlying the network $\mN_n$. The graph distance from the root to a vertex in this graph needs not coincide with the length of a shortest \emph{directed} path. It might be shorter. If we write $\He(\mG_n)$ for the maximal height with respect to the graph distance, then by construction $\He(\mG_n) \le \He(\mN_n)$.  Hence, the tail bound of Theorem~\ref{te:main2} also applies to $\mG_n$:
\begin{align}
	\Pr{\He(\mG_n) > x} \le C \exp(-cx^2/n).
\end{align}
Moreover,  if we let $\he_{\mG_n}(v)$ denote the graph distance  of a vertex in $\mG_n$ from the root and construct the height process for $\mG_n$ accordingly, then it is clear from the proof of Theorem~\ref{te:main1} that there exists a constant $b_k' \ge b_k$ such that
	\begin{align}
	\label{eq:functionalprime}
	(b_k' n^{-1/2} \he_{\mG_n}(v_{s |\mG_n|}) : 0 \le s \le 1) \convdis (\mathsf{e}(s) : 0 \le s \le 1)
\end{align}
Likewise, the same convergence holds for the height process of the leaves.

Finally, we may also study the maximal length of a directed or undirected path starting from the root in $\mN_n$. (As opposed to the maximal length of a shortest directed or undirected path from the root to any vertex in $\mN_n$.) It is clear from the proofs of Theorems~\ref{te:main1} and \ref{te:main2} that~\eqref{eq:height},~\eqref{eq:heightbound},~\eqref{eq:moments1},~and~\eqref{eq:moments2}  hold analogously for these parameters, if we replace the constant $b_k$ by some constant $b_k''>0$ for the maximal length of a directed path, or by some constant $b_k'''>0$ for the maximal length of an undirected path.

The following theorem states that although pyhlogenetic networks are not trees, their global shape is tree-like:

\begin{theorem}
	\label{te:main3}
	Let $\mG_n$ be the graph underlying the random level-$k$ phylogenetic network~$\mN_n$, let $b_k'> 0$ be as in~\eqref{eq:functionalprime}. Let $\mu_n$ denote either the uniform measure on the vertices or on the leaves of $\mG_n$. Let $(\cT_{\mathsf{e}}, d_{\cT_{\mathsf{e}}}, \mu_{\cT_{\mathsf{e}}})$ denote the Brownian continuum random tree. Then
	\begin{align}
			(\mG_n, b_k' n^{-1/2} d_{\mG_n}, \mu_n) \convdis (\cT_{\mathsf{e}}, d_{\cT_{\mathsf{e}}}, \mu_{\cT_{\mathsf{e}}}) \qquad \text{as} \qquad  n \to \infty.
	\end{align}
in the Gromov--Hausdorff--Prokhorov sense.
\end{theorem}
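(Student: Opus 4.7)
The plan is to combine the coupling between $\mN_n$ and a conditioned Galton--Watson tree (used in the proof of Theorem~\ref{te:main1}) with classical Gromov--Hausdorff--Prokhorov scaling limits for such trees, and to show that the extra block structure of $\mG_n$ becomes negligible on the scale $n^{1/2}$. First, I would recall from the proof of Theorem~\ref{te:main1} the natural bridge-tree $\mT_n$ associated with $\mN_n$: its vertices correspond to the blocks of $\mN_n$, and $\mT_n$ is distributed as a Galton--Watson tree conditioned on having $n$ leaves with a finite-variance offspring law. The graph $\mG_n$ is obtained from $\mT_n$ by substituting each non-leaf tree vertex with a decorating $2$-connected block. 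The level-$k$ constraints together with the bridge condition force each such block to have a number of vertices bounded by some constant $C_k$ depending only on $k$, and hence bounded diameter.

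Second, I would invoke the standard GHP scaling limit for conditioned Galton--Watson trees with finite-variance offspring, in the version conditioned on the number of leaves. Combining the invariance principles of~\cite{MR2946438, MR3335013} with the classical Aldous limit yields
\begin{equation*}
(\mT_n, c_k n^{-1/2} d_{\mT_n}, \mu_{\mT_n}) \convdis (\cT_{\mathsf{e}}, d_{\cT_{\mathsf{e}}}, \mu_{\cT_{\mathsf{e}}})
\end{equation*}
in the GHP sense, for some constant $c_k > 0$ and with $\mu_{\mT_n}$ uniform on either leaves or vertices.

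The next step is to control the GHP distance between $\mG_n$ and $\mT_n$ after rescaling. I would construct an explicit correspondence $R_n \subset \mT_n \times \mG_n$ by pairing each tree vertex of $\mT_n$ with every vertex of its decorating block in $\mG_n$. Since each block has diameter at most $C_k$, distances in $\mG_n$ between vertices lying in distinct blocks differ from the corresponding distances in $\mT_n$ by at most a deterministic additive constant $O(C_k)$. Rescaling by $n^{-1/2}$ makes the distortion of $R_n$ tend to zero, which controls the Hausdorff part of the GHP distance.

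The hardest step will be the Prokhorov part, especially for the uniform measure on all vertices, since the push-forward of $\mu_n$ onto $\mT_n$ under $R_n$ weights each tree vertex by a random block size between $1$ and $C_k$. A Boltzmann-type / conditional independence argument (already built into the tree/network coupling of Theorem~\ref{te:main1}) shows that these block sizes are essentially i.i.d.\ given the tree skeleton, so by a concentration argument along the tree together with the continuity of $\mu_{\cT_{\mathsf{e}}}$ on the CRT, the push-forward converges after normalization to a constant multiple of $\mu_{\mT_n}$ in Prokhorov distance. For the leaf measure no such fluctuation arises, since leaves of $\mG_n$ coincide with leaves of $\mT_n$ under the coupling. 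Combining all estimates via the triangle inequality for the GHP distance, and identifying $b_k' = c_k$ through consistency with the height-process limit of Theorem~\ref{te:main1}, yields the claim.
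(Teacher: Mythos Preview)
Your proposal has two related but serious gaps.

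First, the claim that each $2$-connected block has size bounded by a constant $C_k$ depending only on $k$ is false. The level-$k$ constraint bounds only the number of \emph{reticulation} vertices per block, not the total number of vertices. In the paper's encoding, a head structure on $d$ leaves has at most $2(d+k)$ vertices (Proposition~\ref{pro:bound}), and the outdegree $d$ in $\tau_n$ is unbounded: Proposition~\ref{pro:max} gives only $\Delta(\tau_n) = O(\log n)$ with high probability. Hence the maximal block diameter is $O(\log n)$, not $O(1)$.

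Second, and more fundamentally, even if block sizes \emph{were} uniformly bounded, your distortion estimate would still fail. A geodesic in $\mG_n$ between two typical vertices traverses on the order of $\sqrt{n}$ blocks, and each block contributes a random number $\ge 1$ of edges to the path. The discrepancy between $d_{\mG_n}$ and $d_{\mT_n}$ is therefore not an additive $O(C_k)$ but grows linearly in the number of blocks crossed, i.e.\ like $\sqrt{n}$. This is exactly why the paper introduces the \emph{stretch factor} $\Ex{\eta'}$ (the expected undirected traversal cost of a random head structure): the undirected version of Lemma~\ref{le:stretch}, Equation~\eqref{eq:undirected}, gives $\he_{\mG_n}(v) \approx \Ex{\eta'}\,\he_{\tau_n}(v)$ up to $o(\sqrt{n})$ error. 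The paper then establishes the four-point formula
\[
d_{\mG_n}(v,w) = \he_{\mG_n}(v) + \he_{\mG_n}(w) - 2\,\he_{\mG_n}(\mathrm{lca}(v,w)) + O(\log n)
\]
(Equations~\eqref{eq:yaaaaay}--\eqref{eq:errorterm}) and reads off GHP convergence directly from the height-process limit~\eqref{eq:undirfunctional} and the definition~\eqref{eq:crtmetric} of the CRT metric. In particular your proposed identification $b_k' = c_k$ cannot hold: one has $b_k' = c_k / \Ex{\eta'}$, reflecting the stretch.
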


The intuition behind Theorem~\ref{te:main3} is that although $\mG_n$ contains a linear number of cycles, the global shape is tree-like as the cycles are so short (at most $O(\log n)$ in circumference) that they contract to points when rescaling distances by $n^{-1/2}$. Care has to be taken that they nevertheless influence the global shape. A path between two typical points in $\mG_n$  has length roughly $\sqrt{n}$ and traverses roughly $\sqrt{n}$ cycles, thus they distort the distance on average by a stretch factor. 

The Brownian continuum random tree $(\cT_{\mathsf{e}}, d_{\cT_{\mathsf{e}}}, \mu_{\cT_{\mathsf{e}}})$ was introduced and studied in the series of pioneering papers~\cite{MR1085326, MR1166406, MR1207226}. In some sense, it's a random ``continuum''  tree with  uncountably many points. Formally, it may be defined as the random metric space $\cT_{\mathsf{e}}$ corresponding to the random semi-metric 
\begin{align}
	\label{eq:crtmetric}
	d(x,y) = \mathsf{e}(x) + \mathsf{e}(y) - 2\inf_{\min(x,y) \le t \le \max(x,y)} \mathsf{e}(t), \qquad x,y \in [0,1]
\end{align}
on the unit-interval. The measure $\mu_{\cT_{\mathsf{e}}}$ is the push-forward of the uniform measure on $[0,1]$ under the canonical surjection $[0,1] \to \cT_{\mathsf{e}}$.  The Brownian continuum random tree is universal in the sense that it describes the asymptotic geometry of a variety of different models of random graphs \cite{MR3382675, PaStWe2016,MR4132643,StEJC2018}, hence linking seemingly unrelated models of random structures. 

 The notion of Gromov--Hausdorff--Prokhorov convergence was introduced in~\cite{MR2571957}. It allows us to assign a distance to a pair $(X,d_X, \mu_X)$ and $(Y, d_Y, \mu_Y)$ of compact metric spaces equipped with Borel probability measures. We refer the reader to~\cite[Sec. 6]{MR2571957} for a detailed introduction.


\subsection{Local shape and additive parameters}

How does the vicinity of the root or a random vertex (or random leaf) in $\mN_n$ evolve as $n$ becomes large? The frequencies with which we observe given shapes converge, allowing us to describe the asymptotic local shape of $\mN_n$ via random networks having a countably infinite number of vertices.

In order to make this precise, we require some notation.
For any integer $\ell \ge 0$ we may consider the restriction $U_\ell(\cdot)$ that maps a pair of a  phylogenetic network $N$ and one of its vertices $v$ to the $\ell$-neighbourhood subnetwork $U_\ell(N,v)$ induced by all vertices that may be reached from $v$ via an undirected path of length at most $\ell$. That is, all vertices that may be reached from $v$ by crossing at most $\ell$ edges, regardless whether we follow the direction of the edges or not. We consider the vertices of $U_\ell(N,v)$ as unlabelled, and the edges as directed. If $v$ is equal to the root vertex of $N$ we may simply write $U_\ell(N)$.

\begin{theorem}
	\label{te:local1}
	There is a random infinite level-$k$ network $\hat{\mN}$ such that 
	\begin{align}
		\label{eq:local1}
		\mN_n \convdis \hat{\mN} \qquad \text{as} \qquad  n \to \infty
	\end{align}
	in the local topology. Even stronger, for any sequence $\ell_n$ of positive integers satisfying $\ell_n = o(\sqrt{n})$ it holds that
	\begin{align}
		\label{eq:sqqrt}
		d_{\textsc{TV}}(U_{\ell_n}(\mN_n), U_{\ell_n}(\hat{\mN})) \to 0.
	\end{align}
\end{theorem}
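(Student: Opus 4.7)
The plan is to lift the claim to a local convergence result for a conditioned Galton--Watson tree, via the blocktree decomposition of level-$k$ networks that underlies the asymptotic enumeration in Lemma~\ref{le:asymptotics}.

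First, I would use the fact that each level-$k$ network admits a canonical decomposition into a tree of ``generators'', i.e., finitely many types of $2$-connected subnetworks with at most $k$ reticulations, glued along bridges that may pass through intermediate tree vertices. This identifies the set of level-$k$ networks with a family of leaf-labelled plane trees whose internal nodes carry a decoration from a finite alphabet and whose leaves inherit the network leaf labels. After the standard simply-generated reweighting producing the $\rho_k^{-n} n^{-3/2}$ asymptotics, the uniformly random $\mN_n$ corresponds to a decorated Galton--Watson tree $\mathsf{T}_n$ with aperiodic, finite-variance offspring law $\xi$, conditioned on having exactly $n$ leaves; the decorations are, conditionally on the tree, independent with laws depending only on the offspring count, and the leaf labels are a uniformly random bijection with $[n]$.

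Second, I would take $\hat{\mN}$ to be the network reconstructed from Kesten's size-biased infinite tree $\hat{\mathsf{T}}$ of law $\xi$ equipped with the same conditionally independent decoration scheme (the leaves now being unlabelled). Classical local convergence of Galton--Watson trees conditioned on the number of leaves~\cite{MR2946438} gives $\mathsf{T}_n \convdis \hat{\mathsf{T}}$ in the local topology, and since the reconstruction map into a rooted network is continuous, this transports to $\mN_n \convdis \hat{\mN}$, yielding~\eqref{eq:local1}.

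Third, for the strengthening to $\ell_n = o(\sqrt{n})$, I would invoke the quantitative enhancement from~\cite{StEJC2018}: under aperiodicity and finite variance of $\xi$, the $r$-neighbourhood of the root in $\mathsf{T}_n$ agrees in total variation with that of $\hat{\mathsf{T}}$ for any $r = o(\sqrt{n})$. Because the generators have uniformly bounded diameter $D = D(k)$, the undirected $\ell$-neighbourhood $U_\ell(\mN_n)$ is entirely determined by the $D\ell$-ball around the root in $\mathsf{T}_n$ together with the decorations and leaf labels sitting in that ball. Applying the tree bound at radius $D \ell_n = o(\sqrt{n})$, and noting that the bounded-size outer decorations and labels can be coupled exactly between the two models, yields~\eqref{eq:sqqrt}.

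The main obstacle is making the blocktree reconstruction compatible with root-centred neighbourhoods in a quantitative manner: one must verify that $U_{\ell_n}(\mN_n)$ is a measurable function of the $D\ell_n$-ball in the decorated tree, and that the leaf labels appearing in $U_{\ell_n}(\mN_n)$ can be resampled, given the unlabelled ball, independently of the remainder of the tree up to a total variation error that vanishes since only $O(\ell_n) = o(\sqrt{n})$ labels are affected. Once this locality property of the bijection is set up, both convergence statements transport directly from the tree level to the network level.
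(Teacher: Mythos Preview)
Your overall strategy---encode $\mN_n$ as a decorated conditioned Galton--Watson tree, take Kesten's size-biased tree $\hat{\tau}$ with the same decoration scheme as the limit object, and invoke a quantitative $o(\sqrt{n})$ neighbourhood approximation at the tree level---is exactly what the paper does. But two points in your write-up are incorrect as stated, and a third is unnecessary.

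First, the decorations are \emph{not} drawn from a finite alphabet. In the decomposition used here (Lemma~\ref{le:encoding}), each internal vertex of outdegree $d$ carries a head structure from $\cH[d]$, i.e.\ a cherry or a \emph{simple} level-$k$ network on $d$ leaves. Simple networks are obtained from the (finitely many) generators by blowing up edges into paths of arbitrary length, so $\cH[d]$ is nonempty for all $d\ge 2$ and the offspring law $\xi$ has unbounded support; correspondingly the head structures have unbounded diameter. Your ``finite alphabet'' and ``uniformly bounded diameter $D=D(k)$'' claims are therefore false for the decomposition that actually yields a conditioned Galton--Watson tree.

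Second, fortunately, neither false claim is needed. The locality you want holds for a simpler reason: passing from a vertex $v\in\tau_n$ to one of its children corresponds, under $\Lambda^{-1}$, to traversing a root-to-leaf path in $\delta_n(v)$, which has length at least $1$. Hence a vertex of $\tau_n$ at tree-depth $j$ has network-depth at least $j$, and $U_{\ell}(\mN_n)$ is determined by $U_{\ell}(\tau_n)$ together with the decorations $(\delta_n(v))_{v\in U_{\ell}(\tau_n)}$---so one may take $D=1$. This is precisely what the paper uses to pass from $d_{\textsc{TV}}(U_{\ell_n}(\tau_n),U_{\ell_n}(\hat{\tau}))\to 0$ (obtained via Kersting's result for trees conditioned on $n$ vertices, transferred to leaf-conditioning) to~\eqref{eq:sqqrt}.

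Third, your final paragraph about resampling leaf labels is a red herring: by definition the neighbourhoods $U_\ell(\cdot)$ are taken with \emph{unlabelled} vertices, so no label-coupling argument is needed.
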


The limit~\eqref{eq:local1} states that for each integer $\ell \ge 0$ and each finite directed vertex-marked graph $G$ it holds that
\begin{align}
	\Pr{U_\ell(\mN_n) = G} \to \Pr{U_\ell(\hat{\mN}) = G}.
\end{align}
Thus,~\eqref{eq:sqqrt} is a much stronger statement, as it allows us to describe the asymptotic shape of larger neighbourhoods of the root. The assumption $\ell_n = o(\sqrt{n})$ is as general as possible: Thm.~\ref{te:main1} implies that for any constant but arbitrarily small $\epsilon>0$ the convergence in~\eqref{eq:sqqrt} fails to hold for $\ell_n = \lfloor \epsilon \sqrt{n} \rfloor$.

It is natural to also study  what happens in the vicinity of a uniformly selected vertex  of $\mN_n$.

\begin{theorem}
	\label{te:local2}
	Let $u_n$ denote a uniformly selected vertex of $\mN_n$. There is a random infinite directed vertex marked graph $\hat{\mN}^*$ such that 
	\begin{align}
		\label{eq:annealed}
		(\mN_n, u_n) \convdis \hat{\mN}^* \qquad \text{as} \qquad  n \to \infty
	\end{align}
	in the local topology.  Even stronger, for any sequence $\ell_n$ of positive integers satisfying $\ell_n = o(\sqrt{n})$ it holds that
	\begin{align}
		\label{eq:sqqrt2}
		d_{\textsc{TV}}(U_{\ell_n}(\mN_n, u_n), U_{\ell_n}(\hat{\mN}^*)) \to 0.
	\end{align}
	 Moreover, for any integer $\ell \ge 0$ and any finite directed unlabelled vertex-marked graph $G$ the  number $N_{\ell, G}$ of vertices $u$ in $\mN_n$ with $U_\ell(\mN_n, u) = G$ satisfies
	\begin{align}
		\label{eq:quenched}
		\frac{N_{\ell,G}}{|\mN_n|} \convp \Pr{U_\ell(\hat{\mN}^*) = G}.
	\end{align}
\end{theorem}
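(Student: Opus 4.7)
The plan is to extend the branching-process framework underlying Theorems~\ref{te:main1} and~\ref{te:local1} from the root of $\mN_n$ to a uniformly selected vertex. Cutting $\mN_n$ at its bridges decomposes it into blocks (single edges or larger $2$-connected pieces with at most $k$ reticulations), and the resulting block-tree can be modelled as a Galton--Watson tree conditioned to have $n$ leaves. Choosing a uniformly random vertex $u_n$ of $\mN_n$ corresponds, via this decomposition, to first picking a block \emph{size-biased} by the number of network vertices it contributes and then picking a uniformly random vertex inside that block. I would accordingly define $\hat{\mN}^*$ by combining the classical local weak limit of a critical Galton--Watson tree around a uniformly chosen vertex (a sin-tree with a bi-infinite spine in the sense of Aldous) with i.i.d.\ random blocks at every tree vertex, the block at the marked vertex being size-biased by its internal vertex count, and all other blocks being distributed as in the unconditioned model.

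For the strengthened total variation bound~\eqref{eq:sqqrt2}, my plan is to reduce it to a purely tree-level statement. Because block diameters are bounded by a constant depending only on $k$, the neighbourhood $U_{\ell_n}(\mN_n, u_n)$ is a measurable function of an $O(\ell_n)$-block-neighbourhood of the marked block in the block-tree. One can then transfer the known $o(\sqrt{n})$-neighbourhood total variation estimate for a uniform vertex in a conditioned Galton--Watson tree, which is by now standard and was already the engine behind~\eqref{eq:sqqrt}. The size-biasing modifies only finitely many blocks near the root of the neighbourhood and does not affect the total variation rate.

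Finally, I would derive the quenched convergence~\eqref{eq:quenched} via a second moment argument. Writing $p_n := \Prb{U_\ell(\mN_n, u_n) = G}$, the annealed limit~\eqref{eq:annealed} yields $p_n \to \Pr{U_\ell(\hat{\mN}^*) = G} =: p$, which handles the first moment of $N_{\ell,G}/|\mN_n|$. For the variance, let $u_n, u_n'$ be two independent uniform vertices of $\mN_n$; then $\Ex{(N_{\ell,G}/|\mN_n|)^2} = \Prb{U_\ell(\mN_n,u_n)=G,\; U_\ell(\mN_n,u_n')=G}$. By Theorem~\ref{te:main3} the graph distance between $u_n$ and $u_n'$ is of order $\sqrt{n}$ with high probability, so their $\ell$-neighbourhoods are disjoint with probability tending to $1$. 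On this event one expects the joint distribution to factorise, yielding $\Ex{(N_{\ell,G}/|\mN_n|)^2} \to p^2$, and Chebyshev then gives~\eqref{eq:quenched}.

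The main obstacle I expect is the bi-pointed local limit required in the second moment step: one must verify that, with $u_n, u_n'$ independent uniform vertices, $(U_\ell(\mN_n, u_n), U_\ell(\mN_n, u_n'))$ converges in distribution to a product of two independent copies of $U_\ell(\hat{\mN}^*)$. The cleanest route is to condition on the block-tree and prove a two-independent-marked-vertices local limit for conditioned Galton--Watson trees: with high probability the two size-biased vertices are at tree-distance tending to infinity, and their $\ell$-sub-block-trees are asymptotically independent, each distributed as the sin-tree's neighbourhood of its distinguished vertex.
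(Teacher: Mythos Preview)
Your overall strategy is close to the paper's: both reduce to local limits of conditioned Galton--Watson trees via the block (head-structure) decomposition, and both define $\hat{\mN}^*$ as the blow-up of the resulting limit tree. Two points deserve comment.

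First, your claim that ``block diameters are bounded by a constant depending only on $k$'' is false. A head structure in $\cH[d]$ has up to $2(d+k)$ vertices (Proposition~\ref{pro:bound}), and $d$ ranges over all outdegrees in $\tau_n$; these are only $O(\log n)$ with high probability (Proposition~\ref{pro:max}), not bounded. Your conclusion that $U_{\ell_n}(\mN_n,u_n)$ is determined by an $O(\ell_n)$-neighbourhood in the tree is nevertheless correct, but for a different reason: passing from one head structure to the next always costs at least one edge (the bridge), so at most $\ell_n$ tree levels are visited regardless of block sizes. Relatedly, ``bi-infinite spine'' is not what you want; the limit around a uniform vertex has a \emph{one-sided} infinite spine of ancestors, with size-biased offspring along the entire spine (not just at the marked block). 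The paper formalises this by passing to a two-type (sesqui-type) tree $\cT_n$ in which surplus vertices of each head structure are attached as extra children, so that a uniform vertex of $\mN_n$ becomes a uniform vertex of $\cT_n$; it then invokes an existing local limit theorem for conditioned multi-type Galton--Watson trees.

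Second, for the quenched statement~\eqref{eq:quenched} you take a genuinely different route. The paper does not argue via second moments and a bi-pointed limit; instead it cites that the black-box result for multi-type trees already yields quenched convergence $\mathfrak{L}((\cT_n,u_n)\mid\cT_n)\to\mathfrak{L}(\hat{\cT}^*)$, and then lifts this through the decorations and the blow-up $\Lambda^{-1}$ by continuous mapping. Your second-moment approach is standard and would work, but the asymptotic independence step you flag as the main obstacle is precisely what the cited multi-type result packages for free; proving it from scratch would essentially reproduce that argument. If you pursue your route, note that disjointness of the two $\ell$-neighbourhoods in $\mN_n$ does not by itself give factorisation (both neighbourhoods sit in the same random environment); you really do need the two-point local limit at the tree level, which is most cleanly obtained by conditioning on the spine joining the two marked vertices and using that its length is of order $\sqrt{n}$.
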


Borrowing terminology from statistical physics,~\eqref{eq:annealed} may be called annealed local convergence, and~\eqref{eq:quenched} quenched local convergence. 

We might also wonder what happens in the vicinity of a uniform random leaf $w_n$ of $\mN_n$. Theorem~\ref{te:local2} actually entails a local limit for the vicinity of $w_n$ as well. Indeed, it follows directly from~Theorem~\ref{te:local2} that 
\begin{align}
	\frac{|\mN_n|}{n} \convp \frac{1}{\Pr{\text{marked vertex of $\hat{\mN}^*$ is a leaf} } }.
\end{align}
Setting
\begin{align}
	\hat{\mN}^*_{\mathrm{l}} := (\hat{\mN}^* \mid \text{marked vertex of $\hat{\mN}^*$ is a leaf}),
\end{align}
it follows that 
\begin{align}
	(\mN_n, w_n) \convdis \hat{\mN}^*_{\mathrm{l}}
\end{align}
and even
\begin{align}
	d_{\textsc{TV}}(U_{\ell_n}(\mN_n, w_n), U_{\ell_n}(\hat{\mN}^*_{\mathrm{l}})) \to 0.
\end{align}
Likewise, it follows that the number $N_{\ell, G}^{\text{leaf}}$ of leaves whose $\ell$-neighbourhood equals $G$ satisfies
\begin{align}
		\frac{N_{\ell,G}^{\text{leaf}}}{n} \convp \Pr{U_\ell(\hat{\mN}^*_{\mathrm{l}}) = G }.
\end{align}

Using general principles for locally convergent random graphs~\cite{2015arXiv150408103K}, Theorem~\ref{te:local2} entails laws of large numbers for subgraph counts. That is, the number of copies of some given finite graph in $\mN_n$ concentrates at a constant multiple of $n$, with the constant factor being given by the expected value of some functional of $\hat{\mN}^*$. See~\cite[Lem. 4.3]{2015arXiv150408103K} for details.

\subsection*{Notation}

Unless otherwise stated, all unspecified limits are taken as $n \to \infty$.  The arrows $\convp$, $\convdis$, and $\convas$ denote convergence in probability, convergence in distribution, and almost sure convergence. Equality in distribution is denoted by $\eqdist$. The total variation distance between measures and random variables is denoted by~$d_{\textsc{TV}}$. For a vertex $v$ in a directed graph $N$ its \emph{indegree} refers to the number of edges in $N$ whose \emph{destination} is $v$. The \emph{outdegree} of $v$ is the numberof edges in $N$ whose \emph{source} is $v$. We will refer to a vertex $w$ as a \emph{child} of $v$ if there is a directed edge from $v$ to $w$ in $N$. In a rooted undirected tree we use the notions ``outdegree'' and ``child'' as if all edges were directed as pointing away from the root.

\section{A bijective encoding in terms of decorated trees}

Throughout the following, we fix an integer $k \ge 1$ and use the short term \emph{$k$-network} in order to refer  to binary rooted  phylogenetic level-$k$ networks.   The collection of $k$-networks on a given set $X$ will be denoted by $\cN[X]$. Recall that we admit the $k$-network consisting of a single labelled root vertex with no edges.

In \cite{MR2454413, MR2544376} a decompositions of level-$k$ networks into smaller simple networks was developed. In this section we explain how this leads to a blow-up procedure for the random generation of $k$-networks.

\subsection{Level-$k$ networks are blow-ups of decorated  trees}


Let $N$ be a $k$-network and let $v$ be a vertex of $N$ with outdegree $2$.  We say the vertex $v$ \emph{splits} (or is \emph{splitting}), if $N$ contains no vertex that may be reached via a directed path from both children of $v$. That is, we cannot walk from each of its two children to the same vertex by only crossing edges in accordance with their direction.  

This terminology allows us to differentiate three types of $k$-networks:
\begin{enumerate}
	\item The trivial $k$-network consisting of a single root vertex and nothing else.
	\item $k$-networks where the root has outdegree $2$ and splits.
	\item $k$-networks where the root has outdegree $2$ and does not split.
\end{enumerate}

It is easy to describe a $k$-network $N$ on a set $X$ where the root has outdegree $2$ and splits.  $N$ is obtained in a unique way by taking an unordered pair of $k$-networks with disjoint leaf sets that partition $X$, and adding a root vertex that is the source of two directed edges that point to the respective roots of these two $k$-networks.

Now, suppose that $N$ is a $k$-network where the root has outdegree $2$ and does not split. Then it is contained in a unique block $B$ with at least $4$ vertices. Vertices with indegree $1$ and outdegree $1$ in $B$ are tree nodes of $N$, and hence the source of a bridge of $N$ whose destination lies outside of $B$. Likewise, vertices of $B$ with indegree $2$ and outdegree $0$ are reticulation nodes of $N$, and the source of a bridge of $N$ whose destination lies outside of $B$. Let $S$ be the network obtained from $B$ by additionally adding these bridges and their destinations, which then correspond precisely to the leaves of $S$. We say $S$ is a \emph{simple network}, and $B$ is its \emph{core}. Thus, the network $N$ may be obtained from the $k$-network $S$  by identifying the leaves of $S$ with the  roots of smaller $k$-networks. The leaf sets of the $k$-networks attached to $S$ in this way partition the set $X$ into non-empty disjoint subsets. We may view the leaf set of a $k$-network attached to a leaf of $S$ as the label of the leaf. Thus, any $k$-network $N$ on a set $X$ whose root does not split may be obtained in a unique way by forming a partition $M$ of $X$ into non-empty subsets, choosing a simple $k$-network $S$ on $M$, and for each partition class $C \in M$ we identify the leaf of $S$ labelled by $C$ with the root of a $k$-network on $C$. 

Letting $N(k,n), B(k,n) \ge 0$ denote the  number of $k$-networks and  simple $k$-networks on a given $n$-element set, it follows by standard combinatorial tools~\cite{MR633783,MR1629341,MR2483235} that the exponential generating series
\[
	\cN(z) := \sum_{n \ge 1} N(k,n) \frac{z^n}{n!} \qquad \text{and} \qquad \cB(z) := \sum_{n \ge 2} B(k,n) \frac{z^n}{n!}
\]
satisfy the equation
\begin{align}
	\label{eq:decomposition}
	\cN(z) = z + \frac{\cN(z)^2}{2} + \cB(\cN(z)).
\end{align}

Moreover, it also follows that to each $k$-network $N$ on a set $X$ of at least two leaves we may associate a $k$-network $\mathrm{head}(N)$, which is either a simple network on some partition (with non-empty partition classes) of $X$, or it is the cherry network (consisting of a root with two children) on a $2$-partition of $X$ (with non-empty partition classes). The network $N$ is obtained by identifying the leaves of its head with the roots of the  subnetworks on the corresponding partition classes. 

Whenever one of these subnetworks is not the trivial network consisting of a single leaf, it has a head-structure of its own and is constructed from this head structure and even smaller subnetworks. We may proceed recursively in this way until only trivial subnetworks are left. In this way, we may form a pair $\Lambda(N) := (T, \delta)$ of a rooted unordered tree $T$ whose leaves are labelled bijectively with the element of $X$, and a function $\delta$ that assigns to each inner vertex $v$ of $T$ a head structure $\delta(v)$ as its \emph{decoration}. We say $(T, \delta)$ is a \emph{decorated tree}. The formal definition is as follows:
\begin{enumerate}
	\item If a network $N$ is trivial, that is it consists of a single leaf and nothing else, we let $T$ be given by a rooted tree consisting of a single vertex (labelled like the leaf of $N$). As $T$ has no inner vertices, $\delta$ is the trivial function with an empty domain.
	\item If a network $N$ has at least two leaves, it consists of the network $\mathrm{head}(N)$ with some number $\ell \ge 2$ of (possibly trivial) subnetworks $N_1, \ldots, N_\ell$ attached to the leaves of $\mathrm{head}(N)$. We let $T$ be the tree whose root $o$ has $\ell$ children, such that the $i$th child is the root of the tree corresponding (recursively) to $N_i$. We set $\delta(o) = \mathrm{head}(N)$ and extend $\delta$ according to the decorations in $\Lambda(N_1), \ldots, \Lambda(N_\ell)$.
\end{enumerate}

Thus:
\begin{lemma}
	\label{le:encoding}
	For each finite non-empty set $X$, the function $\Lambda$ ($= \Lambda_X$) is a bijection between the collection $\cN[X]$ of $k$-networks on $X$, and the collection $\cP[X]$ of pairs $(T,\delta)$, where $T$ is a rooted unordered tree whose leaves are bijectively labelled with the elements of~$X$, and $\delta$ is a function that assigns to each inner vertex $v$ of $T$ a head structure (that is, a cherry network or a simple network) on the children of $v$ (or equivalently the corresponding partition of $X$).
\end{lemma}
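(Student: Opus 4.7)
The plan is to prove bijectivity by strong induction on $|X|$, exploiting the trichotomy established above: every $k$-network on $X$ is either trivial, has a splitting root, or has a non-splitting root, and in each case admits a canonical head-decomposition. The inverse of $\Lambda$ is the natural \emph{blow-up} map $\Psi \colon \cP[X] \to \cN[X]$ which, given $(T,\delta)$, replaces the root $o$ of $T$ by the network $\delta(o)$ and identifies each leaf of $\delta(o)$ with the root of the network produced recursively from the corresponding subtree of $T$.

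The first step is to check that $\Lambda$ is well-defined, which amounts to verifying that the head $\mathrm{head}(N)$ and the decorated subtree structure are uniquely determined by $N$. In the splitting case this is immediate: the bipartition of $X$ is forced by which leaves of $N$ are reachable from each of the two children of the root. In the non-splitting case one uses the standard block decomposition: the root lies in a unique block $B$ of $N$ with at least $4$ vertices. The tree nodes and reticulation nodes of $B$ each carry a unique outgoing bridge of $N$ that leaves $B$ (this uses the outdegree and indegree constraints together with the definition of a block), and these bridges together with their destinations form exactly the leaves of $\mathrm{head}(N)$. The subnetworks attached to the leaves of $\mathrm{head}(N)$ are then determined unambiguously, and each has strictly fewer leaves than $N$, so the recursion defining $\Lambda$ terminates. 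Conversely, $\Psi$ is well-defined and its output is a $k$-network: the blocks of $\Psi(T,\delta)$ are precisely the cores of the decorations $\delta(v)$ at the inner vertices $v$ of $T$, each of which contains at most $k$ reticulations by the assumption that $\delta(v)$ is a cherry or a simple $k$-network, and the bridge-source condition for blocks of size $\ge 3$ is inherited from the analogous property of the decorations.

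The main step is then to establish the mutual inversion identities $\Psi \circ \Lambda = \mathrm{id}_{\cN[X]}$ and $\Lambda \circ \Psi = \mathrm{id}_{\cP[X]}$, both by induction on $|X|$ (respectively on $|T|$). The only subtle point, and the expected obstacle, is to verify that applying the head-extraction to $\Psi(T,\delta)$ returns exactly $\delta(o)$: this requires that the unique block of $\Psi(T,\delta)$ containing the root, together with its outgoing bridges, reproduces the core and leaves of $\delta(o)$, with no competing decomposition available. Once this is in place, the subnetworks attached to the leaves of $\mathrm{head}(\Psi(T,\delta)) = \delta(o)$ are by construction the blow-ups of the subtrees of $T$ rooted at the children of $o$, and the inductive hypothesis closes the loop in both directions.
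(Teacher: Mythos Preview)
Your proposal is correct and mirrors the paper's approach: the paper does not give a separate formal proof but treats the lemma as following directly from the recursive head-decomposition (trivial / splitting root / non-splitting root) discussed immediately before the statement, together with the description of the blow-up inverse $\Lambda^{-1}$ given immediately after. Your explicit induction on $|X|$ and verification of the mutual-inversion identities $\Psi\circ\Lambda=\mathrm{id}$ and $\Lambda\circ\Psi=\mathrm{id}$ simply make rigorous what the paper leaves to the reader.
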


Lemma~\ref{le:encoding} identifies the combinatorial class of $k$-networks with a  special case of a class of Schr\"oder-enriched parenthesizations.  In general, if we have a class of combinatorial structures where each structure has a ``size'' given by a positive integer, we may form the corresponding class of Schr\"oder-enriched parenthesizations. It is the collection of all unordered rooted trees where each inner vertex has a decoration given by a structure whose size agrees with the outdegree of the vertex. See~\cite{MR1284403} for details. In the present case, the structures are the  head networks, and the ``size'' of a head network is its number of leaves.

The inverse $\Lambda^{-1}$ function of $\Lambda$ may be described as a  blow-up procedure, that maps a decorated tree $(T, \delta)$ to the network obtained by ``blowing up'' each inner vertex $v$ of $T$ by its decoration $\delta(v)$. That is, we delete the edges between $v$ and its children, identify the root of $\delta(v)$ with $v$, and identify each child of $v$ with the corresponding leaf in $\delta(v)$.

\subsection{A sampling procedure that uses simply-generated trees}

Let $\mP_n$ be uniformly selected from the collection $\cP[X]$ for $X :=\{1, \ldots, n\}$, $n \ge 2$.  Since, $\Lambda^{-1}: \cP[X] \to \cN[X]$ is a bijection, it follows that the $k$-network $\Lambda^{-1}(\mP_n)$ is distributed like the uniform $k$-network $\mN_n$ on $X$. Hence we may assume without loss of generality that
\begin{align}
		\mN_n = \Lambda^{-1}(\mP_n). 
\end{align}

Letting $H(k,i) \ge 0$ denote the number of head-networks with $i$ leaves for all $i \ge 0$, and set
\begin{align}
	\label{eq:defH}
		\cH(z) := \sum_{i \ge 2} \frac{H(k,i)}{i!} z^i = \cB(z) + z^2/2.
\end{align}
We refer to $\mathbf{w} := (H(k,i)/i!)_{i \ge 0}$ as a \emph{weight-sequence}. For each set $Y$ we let $\cH[Y]$ denote the collection of head-structures with leaves labelled bijectively by the elements of~$Y$. For ease of notation, we  set
\begin{align}
	\cH[d] := \cH[\{1, \ldots, d\}]
\end{align}
for all integers $d \ge 0$.

A \emph{(planted) plane tree} is a rooted ordered unlabelled tree. The children of any of its vertices are endowed with a linear order. We refer the reader to~\cite[Sec. 1.2.2]{MR2484382} for a detailed introduction to this type of trees. The following procedure, formulated in~\cite[Lem. 6.7]{MR4132643} for general Schr\"oder enriched parenthesizations, allows us to generate $\mP_n$ from a random (weighted) plane tree with $n$ leaves:

\begin{proposition}[{\cite[Lem. 6.7]{MR4132643}}]
	Set $p_0 = 1$ and $p_i = H(k,i) / i!$ for all $i \ge 1$. The outcome $(\tau_n, \delta_n)$ of the following procedure is distributed like $\mP_n$.
	\begin{enumerate}
		\item Generate a random plane tree $\tau_n$ with $n$ leaves according to its distribution
		\begin{align}
			\label{eq:dtau}
		\Pr{\tau_n = T} = (\sum_{P} \prod_{v \in P} p_{d_P^+(v)})^{-1} \prod_{v \in T} p_{d_T^+(v)},
		\end{align}
		with the sum-index $P$ ranging over the finite collection of plane trees with $n$ leaves, where each internal vertex has outdegree at least two.
		\item For each inner vertex $v$ of $\tau_n$ sample a head-structure 
		\begin{align} 
			\delta_n(v) \in \cH[d_{\tau_n}^+(v)] 
		\end{align} uniformly at random.
		\item Choose a bijection $\sigma$ between the  set of leaves of $\tau_n$ and $X$ uniformly at random, and distribute labels to the leaves of $\tau_n$ accordingly.
	\end{enumerate}
\end{proposition}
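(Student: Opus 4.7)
The plan is to verify that the procedure yields the uniform distribution on $\cP[X]$ for $X = \{1, \ldots, n\}$, which by Lemma~\ref{le:encoding} coincides with the distribution of $\mP_n$. I would fix an arbitrary target $(T, \delta) \in \cP[X]$ and directly compute the probability that the output $(\tau_n, \delta_n, \sigma)$ maps to $(T, \delta)$ under the natural forgetful map: this map relabels the leaves of $\tau_n$ via $\sigma$, discards the plane ordering, and transports each ordered decoration $\delta_n(v) \in \cH[d_{\tau_n}^+(v)]$ to a decoration on the resulting unordered children of $v$.

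The first step is a purely combinatorial count. A labelled plane tree (a plane tree whose leaves are bijectively labelled by $X$) is equivalent to a pair $(\tau, \sigma)$ of an unlabelled plane tree and a bijection $\sigma$, and the forgetful map from labelled plane trees to unordered leaf-labelled trees has fibres of exact size $\prod_{v \text{ inner}} d_T^+(v)!$, because leaves of $T$ carry pairwise distinct labels and hence $T$ has trivial automorphism group, so each ordering of the children of every internal vertex produces a distinct plane tree. Once an ordering at a vertex $v$ is fixed, there is a canonical bijection between $\cH[\text{children of } v]$ and $\cH[d_T^+(v)]$, so the unordered decoration $\delta(v)$ corresponds to exactly one value of $\delta_n(v)$.

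Assembling these counts and using~\eqref{eq:dtau}, the probability that the procedure outputs $(T, \delta)$ equals
\[
\prod_{v \text{ inner}} d_T^+(v)! \cdot \frac{1}{Z} \prod_{v \in T} p_{d_T^+(v)} \cdot \prod_{v \text{ inner}} \frac{1}{H(k, d_T^+(v))} \cdot \frac{1}{n!},
\]
where $Z$ denotes the normalising constant from~\eqref{eq:dtau}, and the middle product depends only on the degree sequence of $T$, hence is unaffected by the choice of plane ordering. Substituting $p_0 = 1$ for leaves and $d! \cdot p_d = H(k, d)$ for $d \ge 1$ from the definition of the weight sequence, the factorials and head-structure counts telescope away, leaving the probability equal to $1/(Z \cdot n!)$. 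Since this value is independent of $(T, \delta)$, the output is uniform on $\cP[X]$, as required.

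The only real subtlety is the bookkeeping of factorial factors: one must track carefully how the $d!$ coming from the enumeration of plane orderings, the $1/d!$ built into the definition of $p_d$, and the $1/H(k,d)$ from uniform sampling of decorations all combine. There is no analytic or probabilistic obstacle here — the statement is ultimately a combinatorial identity in the guise of a sampling claim, and the argument above is precisely the specialisation of the general principle of~\cite[Lem. 6.7]{MR4132643} to the weight sequence $(H(k,i)/i!)_{i \ge 0}$.
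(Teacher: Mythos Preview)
Your argument is correct. The paper does not give its own proof of this proposition: it is stated with a citation to \cite[Lem.~6.7]{MR4132643} and used as a black box, so there is no proof in the paper to compare against. What you have written is precisely the direct combinatorial verification that the cited general lemma specialises correctly to the present weight sequence, and the bookkeeping of the factorial factors --- the $\prod_v d_T^+(v)!$ from plane orderings, the $1/d!$ in $p_d$, and the $1/H(k,d)$ from uniform decoration sampling --- is handled cleanly. The one point worth making fully explicit (you gesture at it) is that an unlabelled plane tree is rigid, so a leaf-labelled plane tree really is the same datum as a pair $(\tau,\sigma)$; together with the triviality of the automorphism group of the leaf-labelled unordered tree $T$, this is what makes the fibre count exact rather than merely an upper bound.
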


Note that the decorated tree $\mP_n$ is unordered, whereas in $(\tau_n, \delta_n)$ the children of any vertex are endowed with a linear order. If we forget about these linear orders, we obtain a decorated unordered tree that is distributed like $\mP_n$.  Note also that the distribution of $\tau_n$ in~\eqref{eq:dtau} depends on the weight sequence~$\mathbf{w}$. 

\section{Asymptotic enumeration}

In this section we will prove Lemma~\ref{le:asymptotics} and identify $\tau_n$ as a critical Galton--Watson tree conditioned on having $n$ leaves.

The number of possible simple components in  $k$-networks is infinite. However,  \cite{MR2454413} developed a decomposition of simple networks in terms of  so-called \emph{generators}. A generator is a directed multi-graph obtained from the core of a simple network by contracting each vertex of the core with in-degree $1$ and outdegree $1$. Conversely, simple networks are precisely the networks obtained by blowing up edges of generators into paths (that is, replacing the edge by a path with at least one edge), and afterwards adding outgoing edges to all vertices with indegree $1$ and outdegree $1$, and all vertices with indegree $2$ and outdegree $0$. However, care has to be taken  when a generator contains multi-edges, that is, pairs of vertices joined by exactly two edges. In this case,  for each such pair we have to blow-up at least one of the two edges by a path with at least two edges.

For all $i,j, \ell \ge 0$ let $G(k,i,j,\ell)$ denote the number of generators with the following properties:
\begin{enumerate}
	\item The number of vertices with indegree $2$ is at most $k$.
	\item There are $i$ vertices with outdegree $0$, labelled from $1$ to $i$.
	\item There are $j$ edges that are not multi-edges, labelled from $1$ to $j$.
	\item There are $\ell$ pairs of multi-edges joining the same two vertices, and each pair carries a unique label from $1$ to $\ell$.
\end{enumerate}

It follows that the series $\cH(z)$ defined in Equation~\eqref{eq:defH} satisfies
\begin{align}
	\label{eq:master1}
	\cH(z) = \frac{z^2}{2} + \sum_{i,j,\ell \ge 0} \frac{G(k,i,j,\ell)}{i! j! \ell!} z^i \frac{1}{(1-z)^j} \left( \frac{z}{1-z} + \frac{ z^2}{2(1-z)^2} \right)^\ell.
\end{align}
The total number of generators that contain at most $k$ vertices with indegree $2$ is finite, see~\cite{MR2454413, MR2544376}. Consequently, it follows from~\eqref{eq:master1} that there is a bivariate polynomial $f(z,w) \ne 0$ with $f(0,w)=0=f(z,0)$ such that
\begin{align}
	\cH(z) = \frac{z^2}{2} + f(z, (1-z)^{-1}).
\end{align}
This crucial equation entails that  $\cH(z)$ has radius of convergence $1$ and 
\begin{align}
	\lim_{t \nearrow 1} \cH'(t) = \infty.
\end{align}
Let $0<t_0<1$ be the unique point with $\cH'(t_0) = 1$. Let $\xi$ be a random non-negative integer with distribution given by
\begin{align}
	\Pr{\xi=\ell} = \begin{cases} p_\ell t_0^{\ell-1} & \ell \ge 1 \\ 1 - \sum_{i \ge 1} p_i t_0^{i-1} & \ell=0 \end{cases}.
\end{align}
By the choice of $t_0$, this is well-defined and 
\begin{align}
	\label{eq:crit1}
	\Ex{\xi} = \cH'(t_0) = 1.
\end{align}
As $t_0<1$, it follows that $\xi$ has finite exponential moments. That is, there is an $\epsilon>0$ such that
\begin{align}
	\label{eq:finexp}
	\Ex{  (1+\epsilon)^\xi  } < \infty.
\end{align}
Recall that a $\xi$-Galton--Watson $\tau$ is a random plane tree that starts with a single root vertex and where each vertex receives offspring according to an independent copy of $\xi$. Thus, if $T$ is a plane tree with $n$ leaves,
\begin{align}
	\Pr{\tau= T} = \prod_{v \in T} \Pr{\xi = d^+_T(v)} = \Pr{\xi=0}^n t_0^{n-1} \prod_{v \in T} p_{d^+_T(v)}.
\end{align}
Thus, $\Pr{\tau= T}$ is equal to the product of $\Pr{\tau_n=T}$ and a factor that only depends on $n$ (and not on $T$). Hence, $\tau_n$ is distributed like the result of conditioning $\tau$ on the event that its number $L(\tau)$ of leaves  is equal to $n$. That is,
\begin{align}
	\label{eq:condgwt}
	\tau_n \eqdist (\tau \mid L(\tau) = n).
\end{align}
Equations~\eqref{eq:crit1} and~\eqref{eq:finexp} allow us to apply a general result \cite[Thm. 3.1]{MR2544376} for the number of leaves in a critical Galton--Watson tree, yielding
\begin{align}
	\label{eq:heyoo}
	\Pr{L(\tau) = n} \sim \sqrt{\frac{\Pr{\xi=0}}{2 \pi \Va{\xi}}} n^{-3/2}
\end{align}
The probability generating function  $\cZ(z) := \Ex{z^{L(\tau)}}$
for $L(\tau)$ satisfies the recursive equation
\begin{align}
	\label{eq:ZZ1}
	\cZ(z) = z \Pr{\xi=0} + \sum_{\ell \ge 1} \Pr{\xi=\ell} \cZ(z)^\ell 
		= z (1- \cH(t_0)/t_0) + \cH(t_0\cZ(z)) / t_0. 
\end{align}
Equation~\eqref{eq:crit1} entails that 
\begin{align}
	t_0 = \sum_{i \ge 1} p_i t_0^i > \cH(t_0).
\end{align}
Hence it follows from Equation~\eqref{eq:ZZ1} that
\begin{align}
	t_0 \cZ(z / (t_0 - \cH(t_0))) = z + \cH(t_0  \cZ(z / (t_0 - \cH(t_0)))).
\end{align}
Equation~\eqref{eq:decomposition} entails that $\cN(z) = z + \cH(\cN(z))$, hence
\begin{align}
	\cN(z) = t_0 \cZ(z / (t_0 - \cH(t_0))).
\end{align}
In other words, it holds for all $n \ge 1$
\begin{align}
	N(k,n) =  \Pr{L(\tau)=n} t_0 (t_0 - \cH(t_0))^{-n}.
\end{align}	
Using~\eqref{eq:heyoo}, it follows that
\begin{align}
	N(k,n)		\sim  \sqrt{\frac{\Pr{\xi=0}}{2 \pi \Va{\xi}}} t_0 n^{-3/2}(t_0 - \cH(t_0))^{-n}
\end{align}
This proves Lemma~\ref{le:asymptotics} for $a_k := \sqrt{\frac{\Pr{\xi=0}}{2 \pi \Va{\xi}}} t_0$ and $\rho_k =  t_0 - \cH(t_0)$.

\section{The asymptotic global shape}

In the present section we prove Theorems~\ref{te:main1},~\ref{te:main2}, and~\ref{te:main3}.

\subsection{Preparations}

The following observation is easy, but we are going to use it often enough to state it explicitly:
\begin{proposition}
	\label{pro:bound}
	Let $d \ge 2$ be an integer. Then the number of vertices $|H|$ in a head structure $H \in \cH[d]$ is bounded by $2(d+k)$.
\end{proposition}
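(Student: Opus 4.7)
My plan is to split into the two cases for the shape of $H$ distinguished in Section~2. If $H$ is the cherry network on a $2$-element set, then $|H| = 3 \le 2(d+k)$ and there is nothing to do. From here on I assume $H$ is a simple network, write $B$ for its core, and invoke the generator decomposition recalled in this section: let $G$ be the generator obtained from $B$ by contracting each vertex of $B$ with indegree $1$ and outdegree $1$. By construction, every contracted vertex is a tree node of $N$ whose outgoing bridge ends at a leaf of $H$, and each outdegree-$0$ vertex of $G$ is a reticulation of $N$ whose outgoing bridge ends at a leaf of $H$. Writing $i$ for the number of outdegree-$0$ vertices of $G$, the $d$ leaves of $H$ split into $i$ hanging directly from $G$ and $d-i$ hanging from the contracted vertices, so
\begin{align*}
|H| = |G| + (d-i) + d = |G| + 2d - i,
\end{align*}
and the proposition reduces to a bound of the form $|G| \le 2k + i$.

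The main step is a degree balance in the multigraph $G$. Since $N$ is binary and $B$ is $2$-connected, every vertex of $G$ falls into exactly one of four (indegree, outdegree) classes inside $G$: the root $(0,2)$ (a unique such vertex), tree branchings $(1,2)$, internal reticulations $(2,1)$, and leaf reticulations $(2,0)$. Calling $a$ and $b$ the cardinalities of the second and third classes and equating sum of outdegrees with sum of indegrees yields the linear relation $a = b + 2i - 2$. The level-$k$ hypothesis ensures $b + i \le k$, and substituting back into $|G| = 1 + a + b + i$ gives $|G| = 2b + 3i - 1 \le 2k + i - 1$, which is even slightly stronger than required.

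I do not foresee a genuine obstacle here, but some care is needed to justify the exhaustiveness of the four degree classes: the binary constraint on $N$ restricts vertex types in $N$ to $(0,2)$, $(1,2)$, $(2,1)$, and $(1,0)$, and after passing to $B$ and contracting away the $(1,1)$ vertices, the four classes listed above are precisely what remains. One should also note that because $N$ has no directed cycles, $G$ has no self-loops, so the indegree/outdegree count is not affected by loop edges and extends without change to the multigraph setting.
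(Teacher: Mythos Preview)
Your argument is correct and reaches the same bound (in fact $|H|\le 2(d+k)-1$), but it follows a genuinely different route from the paper. The paper argues constructively: every simple network is built from a cherry by repeatedly either (i) adding two children to a leaf or (ii) fusing two leaves and appending one child. Writing $s_1,s_2$ for the number of steps of each kind gives $V=3+2s_1$, $L=2+s_1-s_2$, and $s_2\le k$, from which $2(L+k)\ge V$ is immediate. Your approach instead exploits the generator decomposition already set up in Section~3 and finishes with a handshake identity in $G$. The paper's version is entirely self-contained and does not need the generator machinery; your version ties the bound to structure that the paper uses anyway for enumeration, and the handshake step is pleasantly mechanical once the degree classes are pinned down.

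One point in your write-up deserves a firmer justification. You assert that the vertices of $G$ fall into exactly the four (indegree, outdegree) classes $(0,2),(1,2),(2,1),(2,0)$, arguing from the vertex types in $N$. What is missing is the reason why passing to $B$ never \emph{decreases} the indegree of a non-root vertex; without this, a tree node of $N$ could in principle become a second $(0,2)$ vertex of $G$, or a reticulation a $(1,1)$ vertex that then gets contracted, spoiling the count $b+i\le k$ and the balance $a=b+2i-2$. The fix is short: if some $v\in B$ had a parent $u\notin B$, the edge $u\to v$ would be a bridge of $N$ whose removal separates $u$ from $B$; but the root lies in $B$ and the unique directed path from the root to $u$ cannot use this edge (it points out of $u$, and $N$ is acyclic), contradicting that the edge is a bridge. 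Hence every incoming edge of every vertex of $B$ stays in $B$, so in $B$ each vertex has the same indegree as in $N$, and your four classes are exhaustive. With that sentence added, the proof is complete.
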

\begin{proof}
	This follows from the fact that any simple network has at most $k$ reticulation vertices, and all other vertices either have outdegree $2$ or $0$: 
	
	We may build  any simple network from a cherry network  (consisting of a root vertex with two children) step by step, where in each step we either add two children to a leaf, or fuse two leaves  together and add a single child to the newly created reticulation vertex. (Please note that not every network created in this way is a simple network.) The first kind of step increases the total number of vertices by two and increases the number of leaves by $1$. The second kind of step leaves the total number of vertices invariant and decreases the number of leaves by $1$. Letting $s_1$ and $s_2$ denote the number of steps of kind $1$ and $2$, the resulting network has $V := 3 + 2s_1$ vertices and $L := 2 + s_1 - s_2$ leaves. Since we can have at most $k$ reticulation vertices, $s_2 \le k$ holds, and hence 
	\[
	2(L+k) \ge 2(2+ s_1) \ge V.
	\]
	Since a head structure from $\cH[d]$ has $d$ leaves, it follows that it has at most $2(d+k)$ vertices.
\end{proof}

We are also going to need a bound for the maximal outdegree of $\tau_n$:
\begin{proposition}
	\label{pro:max}
	There is a constant $C>0$ such that the maximal outdegree $\Delta(\tau_n)$ of $\tau_n$ satisfies
	\begin{align}
		\Delta(\tau_n) \le C \log n
	\end{align}
with a probability that tends to $1$ as $n \to \infty$.
\end{proposition}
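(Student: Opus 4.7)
The plan is to combine the representation $\tau_n \eqdist (\tau \mid L(\tau) = n)$ from \eqref{eq:condgwt} with the exponential moment bound \eqref{eq:finexp}, and to control the expected number of high-degree vertices in the conditioned tree via generating functions. Writing $N_m(\tau) := \#\{v \in \tau : d^+_\tau(v) \ge m\}$, Markov's inequality gives
\begin{align*}
	\Pr{\Delta(\tau_n) \ge m} \le \Ex{N_m(\tau_n)} = \frac{\Ex{N_m(\tau) \one_{L(\tau) = n}}}{\Pr{L(\tau) = n}},
\end{align*}
so the problem reduces to bounding the numerator.

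The numerator admits an explicit generating function via the spine decomposition of plane trees: pointing at a vertex $v^*$ of outdegree $k$ splits $\tau$ into a spine from the root to $v^*$ (with hanging subtrees) plus $k$ independent subtrees rooted at the children of $v^*$. Each spine vertex contributes the factor $\cG'(\cZ(z)) = \sum_{j \ge 1} j \Pr{\xi=j}\cZ(z)^{j-1}$, where $\cG(w) := \Ex{w^\xi}$ and $\cZ$ is as in \eqref{eq:ZZ1}; summing over spine lengths and over $k \ge m$ yields
\begin{align*}
	\Ex{N_m(\tau)\, z^{L(\tau)}} = \frac{\cG_{\ge m}(\cZ(z))}{1 - \cG'(\cZ(z))}, \qquad \cG_{\ge m}(w) := \sum_{k \ge m} \Pr{\xi = k} w^k.
\end{align*}

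I would then apply Flajolet--Sedgewick singularity analysis. Criticality $\cH'(t_0) = 1$ together with~\eqref{eq:finexp} implies that $\cZ(z) = 1 - \alpha \sqrt{1-z}\,(1 + o(1))$ and $1 - \cG'(\cZ(z)) \sim \sigma^2 \alpha \sqrt{1-z}$ near $z = 1$, with $\sigma^2 := \Va{\xi}$, and all quantities continue analytically into a common $\Delta$-domain at $1$. On this domain one can arrange $|\cZ(z)| \le 1 + \eta$ for a small $\eta > 0$ with $(1+\eta) e^{\theta'} \le 1 + \epsilon$ for some $\theta' > 0$, yielding
\begin{align*}
	|\cG_{\ge m}(\cZ(z))| \le \sum_{k \ge m} p_k (1+\eta)^k \le e^{-\theta' m}\, \Ex{[(1+\eta) e^{\theta'}]^\xi} \le C e^{-\theta' m}.
\end{align*}
The transfer theorem then gives $\Ex{N_m(\tau) \one_{L(\tau) = n}} \le C_1 e^{-\theta' m}\, n^{-1/2}$ uniformly in $m$. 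Dividing by $\Pr{L(\tau) = n} \sim c_2 n^{-3/2}$ from~\eqref{eq:heyoo} produces $\Pr{\Delta(\tau_n) \ge m} \le C_3 n\, e^{-\theta' m}$; choosing $m = \lceil C \log n \rceil$ with $C > 2/\theta'$ yields a bound of order $n^{-1}$ and finishes the proof.

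The main obstacle will be to carry out the singularity analysis \emph{uniformly in}~$m$: the numerator $\cG_{\ge m}(\cZ(z))$ must be controlled on the full contour of the transfer theorem, not merely at $z = 1$. The margin provided by the exponential moment hypothesis \eqref{eq:finexp} is exactly what makes this possible via the Cauchy-type estimate above, which absorbs the geometric blow-up $(1+\eta)^k$ arising from $|\cZ(z)| > 1$ while still delivering exponential decay in~$m$.
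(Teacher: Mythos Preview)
Your approach via generating functions and singularity analysis is correct in outline, and the uniformity issue you flag can indeed be handled: factor out $e^{-\theta' m}$ and apply the big-$O$ transfer theorem to the family $e^{\theta' m}\,\cG_{\ge m}(\cZ(z))/(1-\cG'(\cZ(z)))$, which is bounded by a constant multiple of $|1-z|^{-1/2}$ on the $\Delta$-domain uniformly in $m$. However, the paper's proof is far more elementary. It simply observes that on the event $\{L(\tau)=n\}$ the tree $\tau$ has at most $2n-1$ vertices (since $\Pr{\xi=1}=0$ forces every inner vertex to have outdegree at least $2$), so the depth-first outdegree sequence is a prefix of length at most $2n$ of an i.i.d.\ $\xi$-sequence; a union bound then gives $\Pr{L(\tau)=n,\ \Delta(\tau)>x}\le 2n\,\Pr{\xi>x}$. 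Dividing by $\Pr{L(\tau)=n}=\Theta(n^{-3/2})$ from~\eqref{eq:heyoo} and invoking the exponential tail~\eqref{eq:finexp} yields $\Pr{\Delta(\tau_n)>x}\le O(n^{5/2})e^{-cx}$, which suffices for $x=C\log n$. Your route delivers a sharper polynomial prefactor ($n$ in place of $n^{5/2}$), but at the cost of analytic machinery that the statement does not require; the paper's argument avoids the spine decomposition, the square-root singularity expansion, and the uniform transfer entirely.
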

\begin{proof}
	Since $\Pr{\xi=1}=0$, it follows that $\tau_n$ has (almost surely) no inner vertices with outdegree $1$. Consequently, the number $|\tau_n|$ of vertices of $\tau_n$ is bounded by the number of vertices of a binary tree with $n$ leaves, that is,
	\begin{align}
		\label{eq:numvert}
		|\tau_n| \le 2n-1.
	\end{align}
Using~\eqref{eq:heyoo}, it follows that for any $x>0$
	\begin{align}
		\label{eq:oida1}
		\Pr{\Delta(\tau_n) > x} &= \frac{\Pr{L(\tau)=n, \Delta(\tau)>x}}{\Pr{L(\tau) = n}} \\
		&\le \Pr{L(\tau) = n}^{-1} 2n \Pr{\xi > x} \nonumber \\
		&\le O(n^{5/2}) \Pr{\xi > x}. \nonumber 
	\end{align}
	By~\eqref{eq:finexp} it follows that there are constants $C,c>0$ such that
	\begin{align}
		\label{eq:oida2}
		\Pr{ \xi > x} \le C \exp(-cx)
	\end{align}
	for all $x>0$. Taking $x = C' \log n$ for a sufficiently large constant $C'>0$, it follows that
	\begin{align}
		\Pr{\Delta(\tau_n) >  C' \log n} \to 0
	\end{align}
as $n \to \infty$.
\end{proof}

\subsection{Size-biased trees}
\label{sec:sizebias}

 Since $\Ex{\xi}=1$, we may define the size-biased random positive integer $\hat{\xi}$ by
\begin{align}
	\Pr{\hat{\xi} = i} = i \Pr{\xi=i}.
\end{align}
For each integer $\ell \ge 0$, we define the size-biased Galton--Watson tree $\hat{\tau}^{(\ell)}$ as a random finite plane tree with a marked vertex having height $\ell$. For $\ell =0$, we let $\hat{\tau}^{(0)}$ denote an independent copy of the Galton--Watson tree with a marked vertex that coincides with its root vertex. Inductively, we define $\hat{\tau}^{(\ell)}$ for $\ell \ge 1$ as follows: Start with a root vertex that receives offspring according to an independent copy of $\hat{\xi}$. A  child of the root is selected uniformly at random and identified with an independent copy of $\hat{\tau}^{(\ell-1)}$. Each of the remaining children of the root gets identified with an independent copy of $\tau$, that is, a fresh independent copy for each remaining child. 

Thus, the vertices of $\hat{\tau}^{(\ell)}$ that receive offspring according to $\hat{\xi}$ (as opposed to $\xi$) together with the marked vertex of $\hat{\tau}^{(\ell)}$ form a path of length $\ell$ from the root to the marked vertex of $\hat{\tau}^{(\ell)}$.

It is elementary to verify that for each finite plane tree $T$ and each vertex $v$ of $T$ with height $\ell$ it holds that
\begin{align}
	\label{eq:sz1}
	\Prb{\hat{\tau}^{(\ell)} = (T,v)} = \prod_{u \in T} \Prb{\xi= d_T^+(u)} = \Pr{\tau = T}.
\end{align}
Any locally finite rooted tree $T$ may be decorated in a \emph{canonical} random way by choosing for each inner vertex $v$ a decoration from $\cH[d_T^+(v)]$ uniformly at random, independently from the remaining decorations. Thus we may form canonical random decorations $(\tau, \delta)$ and $(\hat{\tau}^{(\ell)}, \hat{\delta}^{(\ell)})$. It follows from~\eqref{eq:sz1} that for any deterministic decoration $\gamma$ of $T$
\begin{align}
	\label{eq:sz2}
	\Prb{(\hat{\tau}^{(\ell)}, \hat{\delta}^{(\ell)}) = (T, \gamma)} &= \Pr{\tau= T} \prod_{\substack{u \in T,  d_T^+(u) > 0}} \frac{1}{|\cH[d^+_T(u)]|} \\
	&= \Prb{(\tau,\delta) = (T,\gamma)}. \nonumber
\end{align}

\subsection{Heights in random networks}

We let the height $\he_{N}(v)$ of a vertex $u$ in a network $N$ be the length of a shortest directed path from the root of $N$ to $u$.
Using the decorated trees $(\tau, \delta)$ and $(\hat{\tau}^{(\ell)}, \hat{\delta}^{(\ell)})$, we are now ready to prove the following:

\begin{lemma}
	\label{le:stretch}
	Let $\eta$ denote the length of a shortest directed path from the root of a uniformly selected network from $\cH[\hat{\xi}]$ to a leaf that is uniformly selected among its $\hat{\xi}$ leaves. With a probability that tends to $1$ as $n \to \infty$, any vertex $v$ of $\tau_n$ has the property, that the corresponding vertex $u$ in $\mN_n= \Lambda^{-1}(\mP_n)$ satisfies
	\begin{align}
		\label{eq:diss}
		|\he_{\mN_n}(u) - \Ex{\eta} \he_{\tau_n}(v)| \le n^{3/8}.
	\end{align}
\end{lemma}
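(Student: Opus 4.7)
The plan is to express $\he_{\mN_n}(u)$ as a sum of near-i.i.d.\ increments along the ancestor path of $v$ in $\tau_n$ and then control the resulting deviation uniformly over $v$. Let $v_0, v_1, \ldots, v_\ell = v$ be the ancestor sequence with $\ell = \he_{\tau_n}(v)$, write $d_i := d^+_{\tau_n}(v_i)$, and let $\eta_{v_i, v_{i+1}}$ denote the length of the shortest directed path inside the decoration $\delta_n(v_i)$ from its root to the leaf identified with $v_{i+1}$. The blow-up description of $\Lambda^{-1}$ immediately gives $\he_{\mN_n}(u) = \sum_{i=0}^{\ell-1} \eta_{v_i, v_{i+1}}$. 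Conditional on $\tau_n$, the decorations $\delta_n(v_i)$ are independent uniform elements of $\cH[d_i]$, and the bijection of $\cH[d]$ given by relabelling leaves shows that $\eta_{v_i, v_{i+1}}$ has the same distribution as $(\eta \mid \hat{\xi} = d_i)$, independently across $i$. Setting $g(d) := \Ex{\eta \mid \hat{\xi} = d}$, so that $\Ex{g(\hat{\xi})} = \Ex{\eta}$, I would decompose
\begin{align*}
	\he_{\mN_n}(u) - \Ex{\eta}\, \he_{\tau_n}(v) = \underbrace{\sum_{i=0}^{\ell-1}\bigl(\eta_{v_i, v_{i+1}} - g(d_i)\bigr)}_{=:\, A_v} + \underbrace{\sum_{i=0}^{\ell-1}\bigl(g(d_i) - \Ex{\eta}\bigr)}_{=:\, B_v}
\end{align*}
and bound each piece by $n^{3/8}/2$ uniformly in $v$.

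For $A_v$, I would work on the high-probability events $\{\Delta(\tau_n) \le C \log n\}$ from Proposition~\ref{pro:max} and $\{\He(\tau_n) \le \sqrt{n}\,\log n\}$, the latter deduced from the Galton--Watson height tail bound of~\cite{MR3077536} applied to $\tau$ and transferred to the leaf-conditioning via~\eqref{eq:heyoo} and~\eqref{eq:numvert}. On these events Proposition~\ref{pro:bound} gives $\eta_{v_i, v_{i+1}} \le 2(C \log n + k)$, so conditionally on $\tau_n$ the summands of $A_v$ are independent with bounded range. Hoeffding's inequality yields $\Prb{|A_v| > n^{3/8}/2 \mid \tau_n} \le 2\exp\bigl(-c\, n^{1/4}/(\log n)^3\bigr)$ for each fixed $v$, and a union bound over the at most $2n-1$ vertices of $\tau_n$ handles $A_v$ simultaneously.

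The term $B_v$ depends only on $\tau_n$, and a naive per-$v$ concentration fails because the outdegrees along a deterministic ancestor path of $\tau_n$ do not form an i.i.d.\ $\hat{\xi}$-sequence. Here the size-biased identity~\eqref{eq:sz1} is essential: for each $\ell \ge 1$ and $t > 0$,
\begin{align*}
	\Exb{\sum_{v \in \tau,\, \he_\tau(v) = \ell} \one(|B_v| > t)} = \Prb{|B_{\mathrm{marked}}(\hat{\tau}^{(\ell)})| > t}.
\end{align*}
By construction the spine outdegrees of $\hat{\tau}^{(\ell)}$ are i.i.d.\ copies of $\hat{\xi}$, so $B_{\mathrm{marked}} = \sum_{i=0}^{\ell-1}(g(\hat{\xi}_i) - \Ex{\eta})$ is a sum of $\ell$ i.i.d.\ zero-mean variables with finite exponential moments (Proposition~\ref{pro:bound} gives $g(d) \le 2(d+k)$, and $\hat{\xi}$ inherits exponential moments from~\eqref{eq:finexp}). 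Bernstein's inequality then gives $\Pr{|B_{\mathrm{marked}}| > n^{3/8}/2} \le 2\exp(-c\, n^{1/4}/\log n)$ for $\ell \le \sqrt{n}\,\log n$. Markov's inequality, summation over $\ell \in \{1,\ldots,\lfloor \sqrt{n}\,\log n\rfloor\}$, and division by $\Pr{L(\tau)=n} \asymp n^{-3/2}$ transfer the uniform bound on $B_v$ from $\tau$ to $\tau_n$; combining the controls on $A_v$ and $B_v$ yields~\eqref{eq:diss}.

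The principal obstacle is the uniform-in-$v$ control of $B_v$: direct Bernstein applies to the i.i.d.\ spine of $\hat{\tau}^{(\ell)}$, not to a deterministic ancestor path of $\tau_n$. The size-biased identity~\eqref{eq:sz1} is precisely what recasts the ``for some $v$'' quantifier as an expectation, reducing the uniform problem to a clean single-spine i.i.d.\ tail bound.
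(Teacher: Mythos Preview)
Your argument is correct, but you take a longer path than the paper does. The paper never splits into $A_v$ and $B_v$. Instead it applies the size-biased identity~\eqref{eq:sz2} directly at the level of the \emph{decorated} tree $(\hat{\tau}^{(\ell)},\hat{\delta}^{(\ell)})$: along the spine of $\hat{\tau}^{(\ell)}$, each vertex has $\hat{\xi}$ children, the decoration is uniform on $\cH[\hat{\xi}]$, and the next spine vertex is a uniformly chosen leaf of that decoration. Hence the height of the marked vertex in $\Lambda^{-1}(\hat{\tau}^{(\ell)},\hat{\delta}^{(\ell)})$ is exactly $\eta_1+\cdots+\eta_\ell$ with i.i.d.\ copies of $\eta$, and a single Chernoff bound (using the exponential moments of $\eta$) gives the deviation estimate; the transfer to $(\tau_n,\delta_n)$ via $\Pr{L(\tau)=n}^{-1}$ is then identical to your $B_v$ step. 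Your decomposition instead separates the decoration randomness ($A_v$, handled conditionally on $\tau_n$ via Hoeffding on the event $\{\Delta(\tau_n)\le C\log n\}$) from the tree-structure randomness ($B_v$, handled via the size-biased identity on the \emph{undecorated} tree). Both routes rely on the same spine trick to recast the ``for some $v$'' quantifier as an expectation; the paper's version is cleaner because bundling tree and decorations into \eqref{eq:sz2} makes the spine increments literal i.i.d.\ copies of $\eta$ and removes the need for the maximal-degree event or the two-term split. Your approach, on the other hand, makes the conditional-on-$\tau_n$ structure more explicit, which could be advantageous if one wanted to prove a quenched version of the statement.
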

\begin{proof}
	Let us take a closer look at the network $\Lambda^{-1}(\hat{\tau}^{(\ell)}, \hat{\delta}^{(\ell)})$. The path $v_0, \ldots, v_\ell$ from the root $v_0$ of $\hat{\tau}^{(\ell)}$ to its marked vertex $v_\ell$ corresponds to vertices $u_0, \ldots, u_\ell$ in $\Lambda^{-1}(\hat{\tau}^{(\ell)}, \hat{\delta}^{(\ell)})$. In particular, $u_0$ coincides with the root of $\Lambda^{-1}(\hat{\tau}^{(\ell)}, \hat{\delta}^{(\ell)})$. Every path in $\Lambda^{-1}(\hat{\tau}^{(\ell)}, \hat{\delta}^{(\ell)})$ from $u_0$ to $u_\ell$  must pass through $u_1, \ldots, u_{\ell-1}$ and is entirely contained in the subnetworks corresponding to $\hat{\delta}^{(\ell)}(v_0), \ldots, \hat{\delta}^{(\ell)}(v_{\ell-1})$. The length of a shortest directed path from $u_0$ to $u_\ell$ is distributed like the sum $\eta_1+ \ldots + \eta_\ell$ of~$\ell$ independent copies $\eta_1, \ldots, \eta_\ell$ of $\eta$.
	
	It follows from Equation~\eqref{eq:finexp} and Proposition~\ref{pro:bound} that $\eta$ has finite exponential moments, that is, 
	\begin{align}
		\label{eq:etaexp}
		\Ex{  (1+\epsilon)^\eta  } < \infty.
	\end{align}
	for some $\epsilon>0$. Let $\tilde{\eta} := \eta - \Ex{\eta}$. It follows that there is a constant $c>0$ such that for all sufficiently small $\lambda>0$
	\begin{align}
		\Ex{\exp(\lambda \tilde{\eta})} \le 1 + c \lambda^2 \qquad \text{and} \qquad \Ex{\exp(-\lambda \tilde{\eta})} \le 1 + c \lambda^2.
	\end{align}
	Using Markov's inequality, it follows that for all $x>0$
	\begin{align}
		\Pr{\eta_1 + \ldots + \eta_\ell - \ell \Ex{\eta}>x} &\le \Prb{ \exp \left( \lambda \sum_{i=1}^\ell (\eta_i - \Ex{\eta}) \right) > \exp(\lambda x)} \\ 
		& \le 
		\frac{\Exb{\exp(\lambda \tilde{\eta})}^\ell}{\exp(\lambda x)} \nonumber \\
		&\le \frac{(1 + c \lambda^2)^\ell }{\exp(\lambda x)} \nonumber.
	\end{align}
Repeating the same argument for $-\tilde{\eta}$ instead of $\tilde{\eta}$, we arrive at
\begin{align}
	\label{eq:llld}
	\Pr{ |\eta_1 + \ldots + \eta_\ell - \ell \Ex{\eta} |>x} \le 2 \frac{(1 + c \lambda^2)^\ell }{\exp(\lambda x)}.
\end{align}
	Taking $x = n^{3/8}$ and $\lambda= n^{-1/4}$, it follows that uniformly for all $1 \le \ell \le \sqrt{n} \log n$
	\begin{align}
		\label{eq:ld}
		\Pr{|\eta_1 + \ldots + \eta_\ell - \ell \Ex{\eta}|> n^{3/8}} \le \exp( - \Theta(n^{1/8}))
	\end{align}
	for all $\ell \ge 1$.
	
	The main result of~\cite{MR2946438} entails that the height of the tree $\tau_n$ admits a distributional limit when rescaled by $n^{-1/2}$. In particular, it is smaller than $\sqrt{n} \log n$ with a probability that tends to $1$ as $n \to \infty$. Thus, the probability that a ``bad'' vertex exists in $(\tau_n, \delta_n)$ such that Inequality~\eqref{eq:diss} fails is bounded by
	\begin{align}
		\label{eq:tobound1}
		o(1) + \sum_{\ell=1}^{\sqrt{n} \log n} \Pr{ (\tau_n, \delta_n) \text{ contains a ``bad'' vertex with height $\ell$} }.
	\end{align}
	We know by~\eqref{eq:numvert} that $\tau_n$ has at most $2n$ vertices in total. Critically applying Equation~\eqref{eq:sz2} and using Inequality~\eqref{eq:ld} and Equation~\eqref{eq:heyoo}, it follows that the sum in~\eqref{eq:tobound1} may be bounded by
	\begin{align}
		\label{eq:getbound}
		&\Pr{L(\tau)=n} \sum_{\ell=1}^{\sqrt{n} \log n} \Pr{ (\tau, \delta) \text{ contains a ``bad'' vertex with height $\ell$} }  \\
		&\le O(n^{3/2})  \sum_{\ell=1}^{\sqrt{n} \log n} (2n) \Pr{ \text{the marked vertex of } (\hat{\tau}^{(\ell)}, \hat{\delta}^{(\ell)}) \text{ is bad} }  \nonumber \\
		&\le O(n^{5/2}) \sum_{\ell=1}^{\sqrt{n} \log n} \Pr{|\eta_1 + \ldots + \eta_\ell - \ell \Ex{\eta}|> n^{3/8}}. \nonumber
	\end{align}
	By Inequality~\eqref{eq:ld}, it follows that this upper bound may be further bounded by $O(n^3 \log n) \exp(- \Theta(n^{1/8}) )$, which tends to zero as $n \to \infty$.  This completes the proof.
\end{proof}

For ease of notation, in everything that follows we will simply consider a vertex $v \in \tau_n$ of the tree $\tau_N$ also as a  vertex $v \in \mN_n$ of the network $\mN_n$. This saves us from repeatedly writing ``the vertex of $\mN_n$ that corresponds to $v \in \tau_n$''.

Note that $\mN_n$ is likely to have more vertices than $\tau_n$. Each head structure $\delta_n(v)$ for $v \in \tau_n$ may contribute a surplus of vertices that do not correspond to any vertices of $\tau_n$. These are precisely the non-root non-leaf vertices of $\delta_n(v)$. We will need fine-grained information on the growth of the surplus:

\begin{lemma}
	\label{le:spread}
	Let $v_1, \ldots, v_{|\tau_n|}$ denote the depth-first-search ordered list of vertices of $\tau_n$. For each head structure $H$ let $S(H)$ denote the number of surplus vertices of $H$, that is, the number of non-root non-leaf vertices of $H$. Let $\kappa$ denote the number of surplus vertices of a uniform random head structure from $\cH[\xi]$. With a probability that tends to $1$ as $n \to \infty$, 
	\begin{align}
		\label{eq:dif}
		\left| \sum_{i=1}^\ell S(\delta_n(v_i)) - \ell \Ex{\kappa} \right| \le n^{3/4}
	\end{align}
	holds for all $1 \le \ell \le |\tau_n|$.
\end{lemma}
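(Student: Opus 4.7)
Define $g : \ndN \to \ndR_{\ge 0}$ by $g(d) := \Ex{\kappa \mid \xi = d}$, i.e.\ the expected number of surplus vertices of a uniform head structure from $\cH[d]$, with the convention $g(0) = g(1) = 0$. Then $\Ex{\kappa} = \Ex{g(\xi)}$, and the sampling procedure ensures that, conditionally on $\tau_n$, the decorations $\delta_n(v_i)$ are independent with $\Ex{S(\delta_n(v_i)) \mid \tau_n} = g(d^+_{\tau_n}(v_i))$ (with $S = 0$ at leaves). My plan is to split
\[
  \sum_{i=1}^\ell S(\delta_n(v_i)) - \ell \Ex{\kappa} = A_\ell + B_\ell,
\]
where $A_\ell := \sum_{i=1}^\ell \bigl(S(\delta_n(v_i)) - g(d^+_{\tau_n}(v_i))\bigr)$ records the fluctuation of the decorations given the tree and $B_\ell := \sum_{i=1}^\ell \bigl(g(d^+_{\tau_n}(v_i)) - \Ex{\kappa}\bigr)$ records the fluctuation coming from the tree alone, and then to control $\max_{\ell \le |\tau_n|}|A_\ell|$ and $\max_{\ell \le |\tau_n|}|B_\ell|$ separately by $n^{3/4}/2$ with high probability.

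\noindent\textbf{Bounding $A_\ell$.} By Proposition~\ref{pro:bound}, $S(H) \le 2(d+k)$ for every $H \in \cH[d]$, so on the high-probability event $\{\Delta(\tau_n) \le C \log n\}$ of Proposition~\ref{pro:max}, the process $(A_\ell)_\ell$ is, conditionally on $\tau_n$, a martingale with increments bounded by $2(\Delta(\tau_n)+k) = O(\log n)$. Combining this with $|\tau_n| \le 2n - 1$ (which holds by~\eqref{eq:numvert}), the maximal form of Azuma--Hoeffding gives
\[
  \Prb{\max_{\ell \le |\tau_n|}|A_\ell| > n^{3/4}/2 \,\Big|\, \tau_n} \le 2 \exp\!\bigl(-\Omega(n^{1/2}/\log^2 n)\bigr)
\]
on that event, so that $\Pr{\max_\ell |A_\ell| > n^{3/4}/2} \to 0$ after integrating over $\tau_n$ and using Proposition~\ref{pro:max}.

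\noindent\textbf{Bounding $B_\ell$.} Here $B_\ell$ depends only on $\tau_n$. I will use~\eqref{eq:condgwt}, $\tau_n \eqdist (\tau \mid L(\tau) = n)$, together with the fact that reading the outdegrees of $\tau$ in depth-first order produces the first $\sigma := |\tau|$ terms of an i.i.d.\ sequence $\xi_1, \xi_2, \ldots$ distributed like $\xi$. Setting $Y_i := g(\xi_i) - \Ex{\kappa}$, the linear bound $g(d) \le 2(d+k)$ and~\eqref{eq:finexp} yield $\Ex{e^{\pm \lambda Y_1}} \le 1 + c\lambda^2 \le e^{c\lambda^2}$ for some $c > 0$ and all sufficiently small $\lambda > 0$. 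A Chernoff bound with $\lambda$ of order $n^{-1/4}$, followed by a union bound over $\ell \in \{1, \ldots, 2n-1\}$ and both signs, produces
\[
  \Pr{\max_{1 \le \ell \le 2n-1} \Bigl|\sum_{i=1}^\ell Y_i\Bigr| > n^{3/4}/2} \le \exp\!\bigl(-\Omega(n^{1/2})\bigr).
\]
Since every internal vertex of $\tau$ has at least two children, $\sigma \le 2L(\tau) - 1$, so $\sigma \le 2n - 1$ on $\{L(\tau) = n\}$. Combining this with $\Pr{L(\tau) = n} \asymp n^{-3/2}$ from~\eqref{eq:heyoo} yields
\[
  \Pr{\max_{\ell \le |\tau_n|} |B_\ell| > n^{3/4}/2} \le O(n^{3/2}) \exp\!\bigl(-\Omega(n^{1/2})\bigr) = o(1).
\]

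\noindent\textbf{Main obstacle.} The delicate step is the bound on $B_\ell$: one has to transfer the concentration estimate from the conditioned tree $\tau_n$ back to an unconditioned i.i.d.\ sequence, which costs a factor of $1/\Pr{L(\tau)=n} \asymp n^{3/2}$. For this to be harmless the tail bound on $|\sum_{i=1}^\ell Y_i|$ must be stretched-exponential rather than polynomial, and this is precisely what~\eqref{eq:finexp} (combined with the linear bound from Proposition~\ref{pro:bound}) supplies. Putting the two estimates together then yields the claimed uniform bound of $n^{3/4}$.
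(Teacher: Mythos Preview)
Your proof is correct, but it takes a more elaborate route than the paper. The paper observes that in the \emph{unconditioned} decorated tree $(\tau,\delta)$, the depth-first-search ordered surplus values $S(\delta(v_1)), S(\delta(v_2)), \ldots$ are themselves an i.i.d.\ sequence with law $\kappa$: the DFS outdegrees are $(\xi_i)_{i\ge 1}$ i.i.d., and given these, the decorations are independent uniforms from $\cH[\xi_i]$, so each $S(H(\xi_i))$ has law $\kappa$ outright. One Chernoff bound on $\sum_{i\le \ell}\kappa_i$ (with $\lambda=n^{-1/2}$, giving $\exp(-\Theta(n^{1/4}))$) plus a union bound over $\ell\le 2n$ and the transfer via $\Pr{L(\tau)=n}^{-1}=O(n^{3/2})$ then finishes the proof in one step.

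By contrast, you split $\sum S(\delta_n(v_i))-\ell\Ex{\kappa}=A_\ell+B_\ell$, handling the decoration fluctuations $A_\ell$ by Azuma--Hoeffding \emph{directly on} $\tau_n$ (using Proposition~\ref{pro:max} to bound increments), and handling the tree fluctuations $B_\ell$ by the same transfer-to-$\tau$ trick applied to $Y_i=g(\xi_i)-\Ex{\kappa}$. Both pieces are handled correctly (your sharper choice $\lambda\asymp n^{-1/4}$ even yields $\exp(-\Omega(n^{1/2}))$ for $B_\ell$), but the decomposition is not needed: once you are willing to pay the $n^{3/2}$ conditioning cost for $B_\ell$, you may as well pay it for the whole sum and avoid the separate Azuma argument for $A_\ell$ and the appeal to Proposition~\ref{pro:max}. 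Your approach does have the minor advantage that the $A_\ell$ bound holds conditionally on $\tau_n$, which could be useful if one wanted a statement on the high-probability set $\{\Delta(\tau_n)\le C\log n\}$ without the transfer.
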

\begin{proof}
	Note that $\kappa$ has finite exponential moments: $\xi$ has finite exponential moments by~\eqref{eq:finexp} and any head structure from $\cH[\xi]$ has at most $2(\xi + k)$ vertices by Proposition~\ref{pro:bound}. Hence $\kappa \le 2(\xi + k)$, entailing that
	\begin{align}
		\label{eq:finkappa}
		\Ex{(1 + \epsilon)^\kappa} < \infty
	\end{align}
	for some $\epsilon>0$.
	
	Let $\kappa_1, \kappa_2, \ldots$ denote independent copies of $\kappa$. The finite exponential moments property~\eqref{eq:finkappa} allows us to argue analogously as for Inequality~\eqref{eq:llld}, yielding that there is a constant $c>0$ such that for all sufficiently small $\lambda >0$ and all $x>0$ and integers $\ell \ge 1$
	\begin{align}
		\Pr{|\kappa_1 + \ldots + \kappa_\ell - \ell \Ex{\kappa}|>x}  \le 2\frac{(1 + c \lambda^2)^\ell }{\exp(\lambda x)}.
	\end{align}
		Taking $x=n^{3/4}$ and $\lambda= n^{-1/2}$, it follows that uniformly for all $1 \le \ell \le 2n$
	\begin{align}
		\label{eq:finineq}
		\Pr{|\kappa_1 + \ldots + \kappa_\ell - \ell \Ex{\kappa}|>n^{3/4}} \le \exp(-\Theta(n^{1/4})).
	\end{align}
	
	Recall that $\tau_n$ has at most $2n$ vertices by Inequality~\eqref{eq:numvert}. Hence the probability that there is a ``bad'' integer $1 \le \ell \le |\tau_n|$ for which~\eqref{eq:dif} fails is bounded by
	\begin{align}
		\label{eq:upperbound1}
		\Pr{L(\tau) = n}^{-1} \sum_{m=n}^{2n} \sum_{\ell=1}^m \Pr{|\tau|=m, \text{$\ell$ is ``bad''} }
	\end{align}
	Let $\xi_1, \xi_2, \ldots$ denote independent copies of $\xi$, and for each $i \ge 1$ let $H(\xi_i)$ be uniformly selected from $\cH[\xi_i]$. Thus, $S(H(\xi_1)), S(H(\xi_2)), \ldots$ are independent copies of~$\kappa$. 

	The depth-first-search ordered outdegrees of $\tau$ may be described by $(\xi_1, \ldots, \xi_L)$ with $L$ denoting the first integer for which $\sum_{i=1}^L (\xi_i -1) = -1$. The decorations $(\delta(v))_{v \in \tau}$ may be described by $H_1, H_2, \ldots$. Thus the  event  that $|\tau|=m$ and that $\ell$ is ``bad'' implies that
	\begin{align}
			\left| \sum_{i=1}^\ell S(H_i) - \ell \Ex{\kappa} \right| > n^{3/4}
	\end{align}
	Using~\eqref{eq:finineq} and~\eqref{eq:heyoo}, it follows that the upper bound~\eqref{eq:upperbound1} may be bounded further by
	\begin{align}
		\Theta(n^{3/2}) 2n^2 \exp(-\Theta(n^{1/4})).
	\end{align}
	This bound tends to zero as $n \to \infty$. Hence the proof is complete.
\end{proof}
We are now ready to prove Theorems~\ref{te:main1},~\ref{te:main2}, and~\ref{te:main3}.
\begin{proof}[Proof of Thm.~\ref{te:main1}]
	Let $v_1, \ldots, v_{|\tau_n|}$ denote the depth-first-search ordered list of vertices of $\tau_n$, which we will also consider as vertices of $\mN_n= \Lambda^{-1}(\mP_n)$. For each $1 \le i \le |\tau_n|$ let $s_i$ denote the size of the surplus $S(\delta_n(v_i))$ and let $v_{i,1}, \ldots, v_{i, s_i}$ denote the vertices of the surplus. Thus,
	\begin{align}
		\label{eq:specod}
		(u_1, \ldots, u_{|\mN_n|}) := (v_1, v_{1,1}, \ldots, v_{1,s_1}, \quad \ldots, \quad   v_{|\tau_n|}, v_{{|\tau_n|},1}, \ldots, v_{{|\tau_n|},s_{|\tau_n|}})
	\end{align} 
	is an ordering of the vertices of $\mN_n$. 
	
	There is a constant $C>0$ such that with a probability that tends to $1$ as $n \to \infty$, it holds for all $1 \le i \le |\tau_n|$ and all $1 \le j \le s_i$ that
	\begin{align}
		\label{eq:yvar}
		|\he_{\mN_n}(v_i) - \he_{\mN_n}(v_{i,j})| \le C \log n.
	\end{align}
	To see this, note that the difference of distances in~\eqref{eq:yvar} is bounded by the number of vertices $|\delta_n(v_i)|$ of the head structure $\delta_n(v_i)$.   By Proposition~\ref{pro:bound}, this is at most $2(d_{\tau_n}^+(v_i) +k)$. by Proposition~\ref{pro:max} this may be further bounded by
	\begin{align*}
		2(d_{\tau_n}^+(v_i) +k) \le 2(\Delta(\tau_n) + k) = O(\log n).
	\end{align*}
	Hence~\eqref{eq:yvar} holds with high probability for all $i$ and $j$.
	
	Note further that for each $1 \le r \le |\mN_n|$ there is a unique index $1 \le i(r) \le |\tau_n|$ such that either $u_r = v_{i(r)}$ or $u_r = v_{i(r),j}$ for some $1 \le j \le s_{i(r)}$. By Inequality~\eqref{eq:yvar} it follows that
	\begin{align}
		\label{eq:hahaha}
		|\he_{\mN_n}(u_r) - \he_{\mN_n}(v_{i(r)})| \le C \log n
	\end{align}
	holds for all $1 \le r \le |\tau_n|$ with a probability that tends to $1$ as $n \to \infty$. By Lemma~\ref{le:stretch} and the triangle inequality, it follows that
	\begin{align}
		\label{eq:AA}
		|\he_{\mN_n}(u_r) - \Ex{\eta}\he_{\tau_n}(v_{i(r)})| \le O(n^{3/8}).
	\end{align}
	
	Furthermore, Lemma~\ref{le:spread} (where $S(\delta_n(v_i))$ corresponds to the notation $s_i$ here) implies that with probability tending to $1$ as $n\to\infty$
	\begin{align}
		\label{eq:BB}
		| |\mN_n| - (1 + \Ex{\kappa})|\tau_n| | \le n^{3/4}
	\end{align}
	and for all $1 \le r \le |\mN_n|$
	\begin{align}
		\label{eq:CC}
		|r - (1 + \Ex{\kappa}) i(r) | \le O(n^{3/4}).
	\end{align}

	By the main result of~\cite{MR2946438}, there is a constant $b>0$ such that the height process $(\he_{\tau_n}(v_{t |\tau_n|}) : 0 \le t \le 1)$  (with linear interpolation between values $\he_{\tau_n}(v_i)$ for $1 \le i \le |\tau_n|$) satisfies
	\begin{align}
		\label{eq:DD}
		(b n^{-1/2}\he_{\tau_n}(v_{t |\tau_n|}) : 0 \le t \le 1) \convdis (\mathsf{e}(s) : 0 \le s \le 1)
	\end{align}
	as random elements of the space of continuous function $\cC([0,1], \ndR)$.

	Now it comes all together: By~\eqref{eq:AA},~\eqref{eq:BB},~\eqref{eq:CC}, and~\eqref{eq:DD} (and the fact $n \le |\tau_n| \le 2n$ from~\eqref{eq:numvert}) it follows that
	\begin{align}
		(b \Ex{\eta}^{-1}) n^{-1/2}\he_{\mN_n}(v_{t |\mN_n|}) : 0 \le t \le 1) \convdis (\mathsf{e}(s) : 0 \le s \le 1).
	\end{align}
	Thus, Equation~\eqref{eq:functional} holds with 
	\begin{align}
		b_k := b / \Ex{\eta}.
	\end{align}

	Let $w_1, \ldots, w_n$ denote the depth-first-search ordered list of leaves of $\tau_n$. Thus, $w_1, \ldots, w_n$ also correspond to the leaves of $\mN_n$. The  convergence
	\begin{align}
		(b \Ex{\eta}^{-1}) n^{-1/2}\he_{\mN_n}(w_{t n}) : 0 \le t \le 1) \convdis (\mathsf{e}(s) : 0 \le s \le 1).
	\end{align}
	follows by analogous arguments, since the number of vertices between consecutive leaves in the depth-first-search ordered list of vertices of $\tau_n$ may be controlled in an entirely analogous fashion as the surplus of the head structures, ensuring that
	\begin{align}
		(b  n^{-1/2}\he_{\tau_n}(w_{t n}) : 0 \le t \le 1) \convdis (\mathsf{e}(s) : 0 \le s \le 1).
	\end{align}
	This completes the proof of Theorem~\ref{te:main1}.
\end{proof}

\begin{proof}[Proof of Theorem~\ref{te:main2}]
	We want to show that there are constants $C,c>0$ such that for all $x>0$ and $n \ge 2$
	\begin{align}
		\label{eq:repeat}
		\Pr{\He(\mN_n) > x} \le C \exp(-c x^2/n).
	\end{align}
	
	By~\eqref{eq:numvert} we know that $|\tau_n| \le 2n$. By Proposition~\ref{pro:bound} it follows that the total number $|\mN_n|$ of vertices in $\mN_n$ satisfies
	\begin{align}
		|\mN_n| &= 1 + \sum_{v \in \tau_n} (|\delta_n(v)| -1) \\
			&\le 1 + 2 \sum_{v \in \tau_n} (d_{\tau_n}^+(v) + k) \nonumber \\
			&= 1 + 2(|\tau_n|-1) + 2|\tau_n|k \nonumber \\
			&\le 4n(k+1). \nonumber
	\end{align}
	Consequently, $\He(\mN_n) \le 4n(k+1)$.
	
	Hence it suffices to show~\eqref{eq:repeat} for $x \le 4n(k+1)$. Moreover, we may always take $C$ large enough (depending on $c$) so that $C \exp(-c)>1$, implying that~\eqref{eq:repeat} is automatically fulfilled for all $0<x<\sqrt{n}$. Thus, it suffices to show the existence of $c,C>0$ such that~\eqref{eq:repeat} holds for all
	\begin{align}
		\label{eq:xas}
		\sqrt{n} \le x \le 4n(k+1).
	\end{align}

	The main theorem of~\cite{MR3077536}  establishes tail-bounds for the height of critical Galton--Watson trees conditioned on having $n$ vertices if the offspring distribution has finite variance. In \cite[Lem. 6.61, Eq. (6.41)]{MR4132643} this result and further observations from~\cite{MR3077536} were used to deduce the similar bounds for blow-ups of such trees. By~\cite{MR3335013}, \cite[Sec. 6.1.7]{MR4132643} we can view a Galton--Watson tree conditioned on having $n$ leaves as a blow-up of a different Galton--Watson tree conditioned on having $n$ vertices, allowing us to apply \cite[Lem. 6.61, Eq. (6.41)]{MR4132643} to deduce that there are constants $c_1,C_1>0$ such that for all $x>0$ and $n \ge 1$
	\begin{align}
		\label{eq:subga}
		\Pr{\He(\tau_n) > x} \le C_1 \exp( -c_1 x^2 /n).
	\end{align}
	
	Let $\epsilon>0$ be given. We will choose a suitable value for $\epsilon$ later on. Inequality~\eqref{eq:subga} entails that
	\begin{align}
		\label{eq:acombine}
		\Pr{\He(\tau_n) > \epsilon x} \le C_1 \exp( -c_1\epsilon^2 x^2 /n).
	\end{align}
	
	 A vertex of $\mN_n$ with maximal height may either be a vertex that pertains to $\tau_n$ or a vertex that lies in the surplus of some head structure. (The later case is possible since we look at directed paths. Thus a vertex with maximal height in $\mN_n$ is \emph{not} necessarily a leaf of $\mN_n$.) If such a  vertex $u$ lies in the surplus of some head structure $\delta_n(v)$ for $v \in \tau_n$, then 
	\[
		\he_{\mN_n}(u) \le \he_{\mN_n}(v) +  |\delta_n(v)|.
	\]
	Hence, if $\he_{\mN_n}(u)>x$, then $\he_{\mN_n}(v) > x/2$ or $|\delta_n(v)|>x/2$. It follows that
	\begin{multline}
		\label{eq:comcombine1}
		\Pr{\He(\tau_n) \le  \epsilon x, \He(\mN_n) >x} \\ \le \Pr{\max_{v \in \tau_n} |\delta_n(v)| >x/2} + \Pr{\He(\tau_n) \le  \epsilon x, \max_{v \in \tau_n} \he_{\mN_n}(v) >x/2}.
	\end{multline}
	By Proposition~\eqref{pro:bound} and Equation~\eqref{eq:oida1}  it follows that
	\begin{align}
		\Prb{\max_{v \in \tau_n} |\delta_n(v)| >x/2}  &\le \Pr{\Delta(\tau_n) > x/4-k}\\
													&\le O(n^{5/2}) \Pr{\xi > x/4-k}. \nonumber
	\end{align}
	Recall that by~\eqref{eq:xas} we assumed that  $\sqrt{n} \le x \le 4n(k+1)$. Hence, using Inequality~\eqref{eq:oida2} it follows that
	\begin{align}
		\label{eq:comcombine2}
		\Prb{\max_{v \in \tau_n} |\delta_n(v)| >x/2} &\le O(n^{5/2}) \exp(- \Theta(x) ) \\
														&\le \exp(-\Theta(x)) \nonumber \\
													&\le \exp(-\Theta(x^2/n)). \nonumber
	\end{align}

	In order to bound the probability for the event that simultaneously $\He(\tau_n) \le  \epsilon x$ and  $\max_{v \in \tau_n} \he_{\mN_n}(v) >x/2$, we may argue identically as for~\eqref{eq:getbound} to obtain
	\begin{align}
		\label{eq:doityyyy}
		\Pr{\He(\tau_n) \le  \epsilon x, \max_{v \in \tau_n} \he_{\mN_n}(v) >x/2} \le O(n^{5/2}) \sum_{\ell=1}^{\lfloor \epsilon x \rfloor} \Pr{\eta_1 + \ldots + \eta_\ell >x/2}.
	\end{align}
	Here $\eta_1, \eta_2, \ldots$ denote independent copies of $\eta$.
	
	Setting $\epsilon := 1/(4 \Ex{\eta})$, it follows that $x/2 - \Ex{\eta}\ell \ge x/4$ for all $1 \le \ell \le \epsilon x$. Hence, by~\eqref{eq:llld}, it follows that there is a constant $c_2 >0$ such that for all sufficiently small $\lambda$ (independent of $\ell$ and $x$) and all $1 \le \ell \le \epsilon x$
	\begin{align}
		\label{eq:llTT}
		\Pr{ \eta_1 + \ldots + \eta_\ell >x/2} \le 2 \frac{(1 + c_2 \lambda^2)^\ell }{\exp(\lambda x/4)} \le 2 \frac{(1 + c_2 \lambda^2)^{x/(4 \Ex{\eta})} }{\exp(\lambda x/4)}
	\end{align} 
	Choosing $\lambda$ small enough, it follows that there are constants $C_2, c_3>0$ such that
	\begin{align}
		\Pr{ \eta_1 + \ldots + \eta_\ell >x} &\le  C_2 \exp(-c_3 x).
	\end{align} 
	Recall that we assumed $\sqrt{n} \le x \le 4n(k+1)$ by~\eqref{eq:xas}, hence it follows from~\eqref{eq:doityyyy} that
	\begin{align}
		\label{eq:thecombine}
		\Pr{\He(\tau_n) \le  \epsilon x, \max_{v \in \tau_n} \he_{\mN_n}(v) >x/2} &\le O(n^{7/2}) \exp(-c_3 x) \\
														&\le \exp(-\Theta(x)) \nonumber \\
														&\le \exp(-\Theta(x^2/n)). \nonumber
	\end{align}
	Combining~\eqref{eq:acombine},~\eqref{eq:comcombine1},~\eqref{eq:comcombine2}  and~\eqref{eq:thecombine}, it follows that
	\begin{align}
		\Pr{\He(\mN_n) >x} \le C_3 \exp(-c_4 x^2 /n)
	\end{align}
for some constants $C_3, c_4>0$ that do not depend on $x$ or $n$.
\end{proof}

\begin{proof}[Proof of	 Thm.~\ref{te:main3}]

We define $\eta'$ analogously to $\eta$, as the length of a shortest \emph{undirected} path from the root of a uniformly selected network from $\cH[\hat{\xi}]$ to a leaf that is uniformly selected among its $\hat{\xi}$ leaves. That is, the path may cross edges in any direction, regardless of their orientation.

Lemma~\ref{le:stretch} holds analogously for undirected paths. That is, interpreting the vertices of $\tau_n$ as part of $\mG_n$, 
\begin{align}
		\label{eq:undirected}
		\sup_{v \in \tau_n} |\he_{\mG_n}(v) - \Ex{\eta'} \he_{\tau_n}(v)| \le n^{3/8}
\end{align}
holds with probability tending to $1$ as $n \to \infty$. Here $\he_{\mG_n}(v)$ denotes the graph distance $d_{\mG_n}(o,v)$ from the root vertex $o$ of $\mG_n$ to the vertex $v$.  
Hence Thm.~\ref{te:main1} and all intermediate observations in its proof hold analogously for undirected paths, that is,
\begin{align}
	\label{eq:undirfunctional}
	(b_k' n^{-1/2} \he_{\mG_n}(v_{s |\mG_n|}) : 0 \le s \le 1) \convdis (\mathsf{e}(s) : 0 \le s \le 1) 
\end{align}
for \begin{align}
	b_k' := b / \Ex{\eta'}
\end{align} as $n \to \infty$.

Given two vertices $v,w \in \tau_n$ there is a unique path $Q$ in $\tau_n$ that joins them in $\tau_n$, but there may be several different paths $P$ in $\mG_n$ that join $v$ and $w$ in $\mG_n$. However, we know that the vertices and edges of  $P$ are always entirely contained in the head-structures $(\delta_n(u))_{u \in {Q}}$.  Let $\mathrm{lca}(u,v)$ denote the lowest common ancestor of $v$ and $w$ in the tree $\tau_n$. 
The distance $d_{\tau_n}$ in the tree $\tau_n$ satisfies
\begin{align}
	d_{\tau_n}(v,w) = \he_{\tau_n}(v) + \he_{\tau_n}(w) - 2 \he_{\tau_n}(\mathrm{lca}(u,v)).
\end{align}
A similar statement holds for the distance $d_{\mG_n}$ in $\mG_n$: If $P_v$ and $P_w$ are \emph{shortest} paths in $\mG_n$ from $v$ to the root $o$ and from $w$ to $o$, we may construct a shortest path $P$ from $v$ to $w$  by following $P_v$ until we encounter for the first time a vertex $v'$ from $\delta_n(\mathrm{lca}(v,w))$, walking from $v'$ to the analogously defined vertex $w'$ from $\delta_n(\mathrm{lca}(v,w))$ by a path that lies entirely in $\delta_n(\mathrm{lca}(v,w))$, and then following $P_w$ (in its reverse direction) from $w'$ back to $w$. This entails that
\begin{align}
	\label{eq:yaaaaay}
	d_{\mG_n}(v,w) = \he_{\mG_n}(v) + \he_{\mG_n}(w) - 2 \he_{\mG_n}(\mathrm{lca}(v,w)) + R(v,w)
\end{align}
for an error term $R(v,w)$ satisfying
\begin{align}
	\label{eq:errorterm}
	|R(v,w)| \le 3 |\delta_n(\mathrm{lca}(v,w))|.
\end{align}

Recall that in Equation~\eqref{eq:specod} we constructed a special ordering
	\begin{align}
	(u_1, \ldots, u_{|\mN_n|}) := (v_1, v_{1,1}, \ldots, v_{1,s_1}, \quad \ldots, \quad   v_{|\tau_n|}, v_{{|\tau_n|},1}, \ldots, v_{{|\tau_n|},s_{|\tau_n|}})
\end{align} 
of the vertices of $\mN_n$. Here $v_1, \ldots, v_{|\tau_n|}$ is a depth-first-search ordering of the vertices of $\tau_n$, where, say, we always proceed along the left-most unvisited vertex. This way, it holds for all $1 \le i \le j \le |\mG_n|$ that 
\begin{align}
	\he_{\mG_n}(\mathrm{lca}(v_i,v_j)) = \min_{i \le r \le j} \he_{\mG_n}(v_r).
\end{align}

By Proposition~\ref{pro:bound} and Proposition~\ref{pro:max}, it follows analogously as for~\eqref{eq:hahaha} that there is a constant $C'>0$ such that \begin{align}
	\max_{v \in \tau_n} |\delta_n(v)| \le 2 (\Delta(\tau_n) + k) \le C' \log n
\end{align} with a probability that tends to $1$ as $n \to \infty$. It follows from Equation~\eqref{eq:yaaaaay} that hence there is a constant $C>0$ such that with a probability that tends to $1$ as $n \to \infty$
\begin{align}
	\label{eq:joooo}
	\left |d_{\mG_n}(u_i,u_j) - \left( \he_{\mG_n}(u_i) + \he_{\mG_n}(u_j) - 2 \min_{i \le r \le j} \he_{\mG_n}(u_r) \right) \right | \le C \log n
\end{align}
for all $1 \le i \le j \le |\mN_n|$. Multiplying  both sides of~\eqref{eq:joooo} by $b_k' n^{-1/2}$, Thm.~\ref{te:main3} now follows from~\eqref{eq:undirfunctional} and the definition~\eqref{eq:crtmetric} of the metric of the Brownian continuum random tree.
\end{proof}

\section{The asymptotic local shape}

Using the notation from Section~\ref{sec:sizebias}, we may set
\begin{align}
	(\hat{\tau}, \hat{\delta}) := (\hat{\tau}^{(\infty)}, \hat{\delta}^{(\infty)})
\end{align}
and 
\begin{align}
	\hat{\mN} := \Lambda^{-1}(\hat{\tau}, \hat{\delta}).
\end{align}
Here, by a slight abuse of notation, we use $\Lambda^{-1}$ to denote the canonical extension of the blow-up procedure $\Lambda^{-1}$ (defined for finite decorated trees) to infinite locally finite decorated trees. The infinite network $\hat{\mN}$ is the local weak limit of $\mN_n$ as $n \to \infty$:

\begin{proof}[Proof of Thm.~\ref{te:local1}]
General results for the local convergence of conditioned Galton--Watson trees~\cite{MR3164755} imply that
\begin{align}
	\tau_n \convdis \hat{\tau}
\end{align}
in the local topology. That is, for any fixed constant $\ell \ge 0$ the probability for the $\ell$-neighbourhood of the root of $\tau_n$ to assume a given shape converges as $n \to \infty$ to the probability for the $\ell$-neighbourhood of the root of $\hat{\tau}$ to assume that shape.

By Skorokhod's representation theorem we may assume that $\hat{\tau}, \tau_2, \tau_3, \ldots$ are coupled such that
\begin{align}
		\tau_n \convas \hat{\tau}.
\end{align}
This entails
\begin{align}
	(\tau_n, \delta_n) \convas (\hat{\tau}, \hat{\delta}).
\end{align}
Hence
\begin{align}
	\mN_n = \Lambda^{-1}(\tau_n, \delta_n) \convas \Lambda^{-1}(\hat{\tau}, \hat{\delta}) = \hat{\mN}.
\end{align}
This proves the local weak convergence in~\eqref{te:local2}.

Kersting~\cite[Thm. 5]{kersting2011height} described the asymptotic shape of $o(\sqrt{n})$-neighbourhoods of critical Galton--Watson trees conditioned on having $n$ vertices (if the offspring distribution has finite variance). Using a transformation from~\cite{MR3335013,MR1284403}, it follows that this also holds for Galton--Watson trees conditioned on having $n$ leaves, yielding
\begin{align}
	d_{\textsc{TV}}(U_{\ell_n}(\tau_n), U_{\ell_n}(\hat{\tau}) ) \to 0
\end{align}
for any sequence $\ell_n$ of positive integers satisfying $\ell_n= o(\sqrt{n})$. Consequently, we may assume that $\hat{\tau}, \tau_1, \tau_2, \ldots$ are coupled such that
\begin{align}
	U_{\ell_n}(\tau_n) = U_{\ell_n}(\hat{\tau})
\end{align}
holds with a probability that tends to $1$ as $n \to \infty$.  Consequently, 
we may construct the decorations in such a way that
\begin{align}
	(U_{\ell_n}(\tau_n), (\delta_n(v))_{v \in U_{\ell_n}(\tau_n)})  = (U_{\ell_n}(\hat{\tau}), (\hat{\delta}(v))_{v \in U_{\ell_n}(\hat{\tau})} )
\end{align}
holds with a probability tending to $1$ as $n \to \infty$. This entails that
\begin{align}
	U_{\ell_n}(\mN_n) = U_{\ell_n}(\Lambda^{-1}(\tau_n, \delta_n) ) = U_{\ell_n}(\Lambda^{-1}(\hat{\tau}, \hat{\delta}) )  =  U_{\ell_n}(\hat{\mN})
\end{align}
again holds with probability tending to $1$ as $n \to \infty$. This implies Equation~\eqref{eq:sqqrt} and completes the proof.
\end{proof}

The surplus vertices of the head structures influence the location of a uniformly selected vertex. To take them into account, we form the tree $\cT_n$ by colouring the vertices of $\tau_n$ blue and adding to each vertex $v \in \tau_n$  additional $S(\delta_n(v))$ red children. We define $\cT$ in same way by adding the surplus vertices of $(\tau, \delta)$ as red children at the appropriate vertices, making $\cT$ a two-type Galton--Watson tree with offspring distribution $(\xi, \kappa)$, with $\kappa$ defined as in Lemma~\ref{le:spread}. (This makes $\xi$ and $\kappa$ dependent on each other.) A uniformly selected vertex $u_n$ of $\mN_n$ hence corresponds to a uniformly selected vertex of $\cT_n$. 

By a general result \cite[Thm. 3]{JOTMULT} for conditioned multi-type Galton--Watson trees, there is a random $2$-type tree $\hat{\cT}^*$ such that
\begin{align}
	\label{eq:cccome}
	(\cT_n,u_n) \convdis \hat{\cT}^*
\end{align}
in the local topology as $n \to \infty$. The tree $\hat{\cT}^*$ has a marked vertex with a finite number of descendants, and an infinite number of ancestors, each having a random finite number of descendants in total. (This agrees with the intuition that most vertices of $\cT_n$ are far from the root and have few descendants.) We may assign random decorations $\hat{\delta}^*(v)$, $v \in  \hat{\cT}^*$ in the same way as before (for $\delta$, $\delta_n$, and $\hat{\delta}$), allowing us to define the infinite network
\begin{align}
	\hat{\mN}^* := \Lambda^{-1}(\hat{\cT}^*, \hat{\delta}^*).
\end{align}
The network $\hat{\mN}^*$ describes the asymptotic shape of the vicinity a random vertex of $\mN_n$, similarly as $\hat{\mN}$ describes the vicinity of the fixed root vertex of $\mN_n$:

\begin{proof}[Proof of Thm.~\ref{te:local2}]
	Applying Skorokhod's representation theorem to~\eqref{eq:cccome}, we may assume that $\hat{\cT}^*, (\cT_2,u_2) , (\cT_3, u_3), \ldots$ are coupled such that
	\begin{align}
		(\cT_n,u_n) \convas \hat{\cT}^*.
	\end{align}
	This entails
	\begin{align}
		((\cT_n,u_n), \delta_n) \convas (\hat{\cT}^*, \hat{\delta}^*).
	\end{align}
	Hence
	\begin{align}
		(\mN_n,u_n) = (\Lambda^{-1}(\cT_n, \delta_n), u_n) \convas \hat{\mN}^*.
	\end{align}
	This proves the local weak convergence in~\eqref{eq:annealed}. The proof of the extension~\eqref{eq:sqqrt2} is entirely analogous to the proof of the corresponding statement~\eqref{eq:sqqrt} for the fixed root, by building on a limit~\cite[Eq. (6.44)]{graphonaccepted} for the $o(\sqrt{n})$-neighbourhood of random points in conditioned sesqui-type trees.
	
	In order to prove the quenched convergence in~\eqref{eq:quenched}, note that stating~\eqref{eq:quenched} to hold for all $\ell \ge 1$ and all vertex-marked graphs $G$ is equivalent to stating that the random probability measure $\mathfrak{L}( (\mN_n, u_n) \mid \mN_n)$ given by the uniform measure on the $|\mN_n|$ many vertex-marked versions of $\mN_n$ satisfies
	\begin{align}
		\label{eq:toshoooow}
		\mathfrak{L}( (\mN_n, u_n) \mid \mN_n) \convdis \mathfrak{L}(\hat{\mN}^*),
	\end{align}
	with $\mathfrak{L}(\hat{\mN}^*)$ denoting the deterministic law of $\hat{\mN}^*$. Now,  \cite[Thm. 3]{JOTMULT}  ensures such a quenched limit for the uniform measure $\mathfrak{L}( (\cT_n, u_n) \mid \cT_n) $ on the $|\cT_n|$-many ($=|\mN_n|$ many) vertex marked versions of $\cT_n$, with the deterministic limit measure given by the law $\mathfrak{L}(\hat{\cT}^*)$ of $\hat{\cT}^*$. That is,
	\begin{align}
			\mathfrak{L}( (\cT_n, u_n) \mid \cT_n) \convdis \mathfrak{L}(\hat{\cT}^*).
	\end{align}
	 Using the Chernoff bounds, this implies such a limit for the decorated versions:
	 \begin{align}
	 	\label{eq:almost}
	 	\mathfrak{L}( ((\cT_n, u_n),\delta_n) \mid \cT_n) \convdis \mathfrak{L}(\hat{\cT}^*, \hat{\delta}^*).
	 \end{align}
	The limit~\eqref{eq:toshoooow} now follows from~\eqref{eq:almost} by applying the blow-up procedure $\Lambda^{-1}$ and the continuous mapping theorem. This completes the proof.
\end{proof}

\bibliographystyle{abbrv}
\bibliography{blank}

\end{document}